\documentclass[10pt]{article}
\usepackage{tikz,tikz-3dplot}
\usepackage{amsmath,amsthm}
\usepackage{amssymb}
\usepackage{amsfonts}
\usepackage{xcolor}
\usepackage{stmaryrd}
\usepackage{graphicx}
\usepackage{sidecap}
\usepackage[T1]{fontenc}
\linespread{1.05}
\usepackage{microtype}
\usepackage{floatrow}
\usepackage[english]{babel}
\usepackage[hmarginratio=1:1,top=30mm,columnsep=20pt]{geometry}
\usepackage[hang, small,labelfont=bf,up,textfont=it,up]{caption}
\usepackage{booktabs}
\usepackage{enumitem}
\usepackage{indentfirst}
\usepackage{lettrine}
\usepackage{subfigure}
\usepackage{diagbox}
\usepackage{multirow}
\usepackage{multicol}
\setlist[itemize]{noitemsep}
\usepackage{titlesec}
\usepackage[ruled]{algorithm2e}
\usepackage{titling}
\usepackage{hyperref}
\usepackage{tkz-euclide,pgfplots}

\usepackage{float} 
\usepackage{tkz-euclide,pgfplots}
\usepackage{amsthm}
\usepackage{amsmath}
\usepackage{amssymb}
\usepackage{indentfirst}
\usepackage{lettrine}
\usepackage{booktabs} 
\usepackage{hyperref}
\usepackage{multirow} 
\usepackage{multicol} 
\usepackage{arydshln}
\usepackage{verbatim} 
\usepackage{tkz-euclide,pgfplots}
\newtheorem{lemma}{Lemma}
\newtheorem{theorem}{Theorem}

\newtheorem{remark}{Remark}


\setlength{\droptitle}{-4\baselineskip}

\title{Two Results on Low-Rank Heavy-Tailed Multiresponse Regressions}
\author{Kangqiang Li\thanks{Corresponding author E-mail address: 11935023@zju.edu.cn (Kangqiang Li)} \qquad Yuxuan Wang\thanks{E-mail address: 12235030@zju.edu.cn (Yuxuan Wang)}\\[1ex] 
	\normalsize School of Mathematical Sciences, Zhejiang University,  Hangzhou,  Zhejiang 310027,  China \\ 
}
\date{}
\begin{document}
	\maketitle
	\begin{abstract}
		This paper gives two theoretical results on estimating low-rank parameter matrices for linear models with multivariate responses. We first focus on robust parameter estimation of low-rank multi-task learning with heavy-tailed data and quantization scenarios. It comprises two cases: quantization under heavy-tailed responses and quantization with both heavy-tailed covariate and response variables. For each case, our theory shows that the proposed estimator has a minmax near-optimal convergence rate. We then further investigate low-rank linear models with heavy-tailed matrix-type responses. The theory shows that when the random noise has only $(2+\epsilon)$-order moment, our robust estimator still has almost the same statistical convergence rate as that of sub-Gaussian data. Moreover, our simulation experiments confirm the correctness of theories and show the superiority of our estimators.
	\end{abstract}
	\section{Introduction}
	In the era of big data, the computational power of computers is gradually unable to meet the need of efficiently processing massive data. How to bridge the gap between computing power and the huge amount of calculation has become one of the research hotspots in academia and industry. Therefore, in recent years, quantization for high-precision and less important data is more and more popular, which is a signal compression technology that converts floating-point or continuous data into discrete or even integer data. Quantization can effectively compress the model size, reduce the memory space of the data and improve the computational efficiency. For example, in machine learning scenarios, massive data is usually stored in multiple servers far apart. High communication costs and the requirement of user privacy hinder data centralization. Therefore, many literature proposed various divide-and-conquer algorithms, and some of them applied the technique of quantizing stochastic gradients, such as Basu et al. (2019)\cite{Basu}, Seide et al. (2014)\cite{Seide} and  Zheng et al . (2019)\cite{Zheng} et al. Transmitting quantized data between multiple servers can effectively reduce communication costs while maintaining low information loss. On the other hand, in the field of signal processing and statistics, more and more scholars make statistical inference on various statistical models in the context of data quantization. Thrampoulidis (2020)\cite{Thra} considered high-dimensional linear models under uniform quantization and one bit quantization, and demonstrated the near optimality of their estimators. Further, for compressed sensing and matrix completion models with heavy tailed data, Chen et al. (2022)\cite{Chen3} and Chen et al. (2023)\cite{Chen1} designed minimax statistically optimal estimators under one bit quantization and uniform quantization scenarios, respectively. In addition, Chen et al. (2023)\cite{Chen2} studied low-rank linear models with multidresponse under uniformly quantized sub-Gaussian data. Dirksen et al. (2022)\cite{Dirksen2} constructed covariance matrix estimator under one-bit quantization and provided the optimal non asymptotic upper bound when only obtaining two bits for each term of the sample.\par
	For massive data, in addition to the difficulties of training and computing, it is more worthy of attention that it usually has the characteristic of heavy tail. When using the classical statistical method under sub-Gaussian assumption, this phenomenon will lead to high error output and even non convergence of the algorithm. Therefore, some effective approaches to adequately estimate regression parameter with the heavy-tailed noise have been proposed by numerous literature. One of the popular ways is to substitute the traditional square loss with some robust loss functions such as absolute loss, Huber loss (Huber (1973)\cite{Huber}) and Cauchy loss. This type of robust technique was originally aimed at achieving the outlier-robustness. Recently, Fan et al. (2017)\cite{symmetry} first employed the Huber loss into linear regression problem to investigate the robustness against heavy-tailness of the regression error. Their theoretical result unveils that under only finite second order moment condition on the noise, the proposed robust estimator has the same optimal rate as the case of sub-Gaussian tails via carefully tuning the robustification parameter of the Huber loss. It's worth mentioning that another robust loss function proposed by Catoni (2012)\cite{catoni} recently has evoked a growing concern and formed the basis for constructing a series of tail-robust estimators. For example, Minsker (2018) proposed a multivariate extension of Catoni (2012)\cite{catoni}'s estimator and applied the developed estimator to matrix completion with bounded second moment noise. The most eye-catching and convenient methods recently is to shrinkage data (Fan et al. (2021)\cite{shrinkage}). Specifically, Fan et al. (2021)\cite{shrinkage} proposed a shrinkage principle for low-rank matrix recovery with heavy-tailed data. They truncated large heavy-tailed responses or covariates, and used the clipped data into least-squares method. Their theoretical results show that under mild moment constraint condition, the robust estimator achieves nearly the same statistical error rate as the case of sub-Gaussian tails.
	\par
	Due to the above two problems encountered in dealing with big data and the corresponding remedies, in this paper, we study the robust low-rank parameter estimation of linear models with multiresponses under quantization scenarios or heavy-tailed data. Specifically, for the low-rank multi-task learning, we consider both heavy-tailed and quantized case. The difference from directly using the original data for statistical inference is that we first preprocess the data via quantization and shrinkage (if the data is sub-Gaussian, only quantization is required), and then construct and solve the corresponding regularized least squares to obtain the $M$-estimator. For low-rank linear model with matrix response, we only consider the case of heavy-tailed response. Inspired by the robust mean estimator for random matrices proposed by Minsker (2018)\cite{Minsker1}, we introduce a similar robust approach into the heavy-tailed matrix response case. Our theories state that when the random noise of this two models have only finite $(2+\epsilon)$-th moment, the proposed estimators generated by these preprocessed data have the near-optimal convergence rate. In addition, the theory also clarifies that when the covariates and response variables in the multi-task learning model have only finite $4$-th moment, the proposed estimator still possesses the optimality of the statistical error rate. extensive numerical simulations support our theoretical results and show the validity of our robust $M$-estimators.
	\par
	The remainder of this paper is structured as follows. Section \ref{1.1} gives the mathematical notation used in this paper. Section \ref{2} shows the main results of this paper. In Section \ref{3}, simulation experiments are presented. A brief discussion is given in Section \ref{4}. 
	\subsection{Notation}\label{1.1}
	For any positive integer $n$, we denote the set $\{1,2,\ldots,n\}$ by $[n]$. For $a,b \in \mathbb{R}$, $a\vee b$ and $a \wedge b$ stand for the maximum and minimum of $a$ and $b$, respectively. Uppercase letters are denoted as vectors or matrices. For two matrices $X, Y \in \mathbb{R}^{d_{1}\times d_{2}}$, $\langle X,Y\rangle:=\text{tr}(X^{\top}Y)$. The Frobenius norm of $X$ are defined as $\|X\|_{F}=\sqrt{\sum_{i,j}X_{(i,j)}^2}$. The nuclear norm and spectral norm are defined as $\|X\|_{\star}=\text{tr}\left(\sqrt{X^{\top}X}\right)$ and $\|X\|_{\text{op}}=\sqrt{\lambda_{\max}\left(X^{\top}X\right)}$, respectively. For a sub-Gaussian random vector $X\in \mathbb{R}^{d}$, its sub-Gaussian norm is defined as $\|X\|_{\psi_{2}}=\sup_{u\in\mathcal{S}^{d-1}}\left\{\sup_{k\geq 1}\left\{\sqrt[k]{\mathbb{E}[|u^{\top}X|^k]}/\sqrt{k}\right\}\right\}$. We denote $\mathcal{U}\left(\left[-\eta, \eta\right]^{d}\right)$ and $T_{d}(\mu, \Sigma,\nu)$ as the uniform distribution over $\left[-\eta, \eta\right]^{d}$ and $d$-variate $t$ distribution with degree of freedom $\nu$, mean vector $\mu$ and positive definite matrix $\Sigma$, respectively. Given two sequences $\{a_{n}\}_{n=1}^{\infty}$ and $\{b_{n}\}_{n=1}^{\infty}$, we use the notation $a_{n} \asymp b_{n}$, if $b_{n} \lesssim a_{n}\lesssim b_{n}$ where $a_{n}\lesssim b_{n}$ means that there exists a positive constant $C$ such that $a_{n}\leq C b_{n}$ for all $n$. Let $f$ be a real-valued function defined on $\mathbb{R}$ and $A \in \mathbb{R}^{d \times d}$ be a symmetric matrix with the eigenvalue decomposition $A=V \Lambda V^{\top}$. We define $f(A)$ as $f(A)=V f(\Lambda) V^{\top}$, where $f(\Lambda)=$ $\operatorname{diag}\left(f\left(\lambda_1(A)\right), f\left(\lambda_2(A)\right), \ldots, f\left(\lambda_d(A)\right)\right)$.
	\section{Main results}\label{2}
	\subsection{Quantized low-rank multi-task learning with heavy-tailed data}\label{subsec1}
	In this subsection, we consider multi-task learning, 
	i.e., $\left\{(Y_{i},X_{i})\right\}_{i=1}^n$ are i.i.d. sampled from the following model:
	$$Y=\Theta_{\star}^{\top} X+\epsilon$$
	where $X\in \mathbb{R}^{d_{1}}$ and $ Y, \epsilon\in \mathbb{R}^{d_{2}}$ are covariate, response and random noise respectively, and $\mathbb{E}[\epsilon|X]=0_{d_{2}\times 1}$. To estimate low-rank parameter matrix $\Theta_{\star}\in \mathbb{R}^{d_{1}\times d_{2}}$, we consider minimizing the expected $\ell_{2}$ loss which is defined as $\mathbb{E}[\|Y-\Theta^{\top}X\|_{2}^2]=\left\langle\Theta, \Sigma_{XX}\Theta\right\rangle-2\left\langle \Theta, \Sigma_{XY}\right\rangle+C$ where $\Sigma_{XX}:=\mathbb{E}[XX^{\top}]$, $\Sigma_{XY}:=\mathbb{E}[XY^{\top}]$ and $C$ is a constant independent of $\Theta$. Therefore, under different quantization settings, we replace $\Sigma_{XX}$ and $\Sigma_{XY}$ with corresponding estimators $\widetilde{\Sigma}_{XX}$ and $\widetilde{\Sigma}_{XY}$, and construct the empirical loss function $\widehat{\mathcal{L}}_{n}(\Theta)=\left\langle\Theta, \widetilde{\Sigma}_{XX}\Theta\right\rangle-2\left\langle \Theta, \widetilde{\Sigma}_{XY}\right\rangle$. Meanwhile, under low rank structure of $\Theta_{\star}$, we solve the following the regularized least squares program: 
	\begin{equation}\label{eq0}
		\widehat{\Theta}=\underset{\Theta\in \mathbb{R}^{d_{1}\times d_{2}}}{\arg\min}\left\{\widehat{\mathcal{L}}_{n}(\Theta)+\lambda \cdot \left\|\Theta\right\|_{\star}\right\}
	\end{equation}
	where $\lambda$ is a tuning parameter.
	Consider two cases: (a) quantization for heavy-tailed covariates and responses; (b) quantization for only heavy-tailed responses.\par
	Before estimating $\Sigma_{XX}$ and $\Sigma_{XY}$, we need to preprocess the data. i.e., we first truncate the heavy tailed data appropriately, and then quantize the truncated version of data.  Specifically, if $(X_{i}, Y_{i})$ are heavy-tailed, we shrinkage  samples via $\widehat{X}_i:=\frac{\|X_{i}\|_{2}\wedge \tau}{\left\|X_i\right\|_2} X_i$ and $ \widehat{Y}_i:=\frac{\|Y_{i}\|_{2}\wedge \varpi}{\left\|Y_i\right\|_2} Y_i$, where $\tau$ and $\varpi$ are robustification parameters to be specified. If only the response variable is heavy-tailed, we truncate $Y_{i}$. Then  generate random dither 
	$\Lambda_{i1}\sim \mathcal{U}\left(\left[-\frac{\eta_{1}}{2}, \frac{\eta_{1}}{2}\right]^{d_{1}}\right)+\mathcal{U}\left(\left[-\frac{\eta_{1}}{2}, \frac{\eta_{1}}{2}\right]^{d_{1}}\right)$, $\Lambda_{i2}\sim \mathcal{U}\left(\left[-\frac{\eta_{2}}{2}, \frac{\eta_{2}}{2}\right]^{d_{2}}\right)$ and use the quantization function $Q_{\eta}(x):=\eta\left(\left\lfloor\frac{x}{\eta}\right\rfloor+\frac{1}{2}\right), x\in \mathbb{R}$ to quantize each element of the truncated data: 
	$\widetilde{X}_{i}:=Q_{\eta_{1}}\left(\widehat{X}_i+\Lambda_{i1}\right), \widetilde{Y_{i}}:=Q_{\eta_{2}}\left(\widehat{Y}_i+\Lambda_{i2}\right)$. $\eta_{1}$ and $\eta_{2}$ are quantization parameters to quantify the tradeoff between the precision and  discreteness of data.\par
	After truncating and quantizing the data, we construct the corresponding estimators: $\widetilde{\Sigma}_{XX}=\frac{1}{n}\sum_{i=1}^n\widetilde{X}_{i}\widetilde{X}_{i}^{\top}-\frac{\eta_{1}^2}{4}I_{d_{1}}$, $\widetilde{\Sigma}_{XY}=\frac{1}{n}\sum_{i=1}^n\widetilde{X}_{i} \widetilde{Y}_{i}^{\top}$.
	The following lemma presents the statistical error rate of the estimator $\widetilde{\Sigma}_{XX}$ under the spectral norm when $\{X_{i}\}_{i=1}^n$ are heavy-tailed.
	\begin{lemma}\label{lemma1}
		Suppose $\{X_{i}\}_{i=1}^n$ are i.i.d. $d_{1}$-dimensional random vectors with $\sup_{u\in \mathcal{S}^{d_{1}-1}}\mathbb{E}\left[(X_{i}^{\top}u)^4\right]\leq M <\infty$ and $ \mathbb{E}[X_{i}]=0_{d_{1}\times 1}$. By choosing $\tau\asymp\sqrt[4]{\frac{nd_{1}M}{\log(d_{1})}}$, there exists a positive constant $C$ such that for any $ \delta >2,$
		$$\operatorname{P}\left(\left\|\widetilde{\Sigma}_{XX}-\Sigma_{XX}\right\|_{\text{op}}\leq C\delta ( M^\frac{1}{2}+\eta_{1}^2)\sqrt{\frac{ d_{1} \log d_{1}}{n}}\right) \geq 1-2(d_{1}^{1-\delta}+d_{1}^{2-\delta}).$$
	\end{lemma}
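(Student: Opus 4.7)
The plan is to split $\widetilde{\Sigma}_{XX}-\Sigma_{XX}$ into a truncation bias and three stochastic fluctuation terms, and to control each by a matrix Bernstein inequality. The backbone is the dithering property of $\Lambda_{i1}$: being a sum of two independent uniforms on $[-\eta_1/2,\eta_1/2]^{d_1}$, the residual $\xi_i:=\widetilde{X}_i-\widehat{X}_i$ satisfies $\mathbb{E}[\xi_i\mid\widehat{X}_i]=0$, $\mathbb{E}[\xi_i\xi_i^{\top}\mid\widehat{X}_i]=\tfrac{\eta_1^{2}}{4}I_{d_1}$, and $\|\xi_i\|_{\infty}\le\tfrac{3}{2}\eta_1$ almost surely. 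In particular $\mathbb{E}[\widetilde{\Sigma}_{XX}]=\mathbb{E}[\widehat{X}_i\widehat{X}_i^{\top}]$, which justifies the correction $-\tfrac{\eta_1^2}{4}I_{d_1}$ baked into the estimator.

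For the bias $\|\mathbb{E}[\widehat{X}_i\widehat{X}_i^{\top}]-\Sigma_{XX}\|_{\mathrm{op}}$, observe that $(X_i^{\top}u)^2-(\widehat{X}_i^{\top}u)^2=(1-\tau^2/\|X_i\|_2^2)(X_i^{\top}u)^2\mathbb{1}_{\{\|X_i\|_2>\tau\}}$, so Cauchy--Schwarz together with the fourth-moment bounds $\mathbb{E}[(X_i^{\top}u)^4]\le M$ and $\mathbb{E}[\|X_i\|_2^4]\le d_1^{2}M$ (the latter from $(\sum_j X_{ij}^2)^2\le d_1\sum_j X_{ij}^4$ and the hypothesis at $u=e_j$) gives a bias of at most $d_1M/\tau^2$. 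Under the choice $\tau^2\asymp\sqrt{nd_1M/\log d_1}$ this is exactly $\sqrt{M}\sqrt{d_1\log d_1/n}$, matching the target order.

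For the stochastic part, expanding $\widetilde{X}_i\widetilde{X}_i^{\top}=\widehat{X}_i\widehat{X}_i^{\top}+\widehat{X}_i\xi_i^{\top}+\xi_i\widehat{X}_i^{\top}+\xi_i\xi_i^{\top}$ gives $\widetilde{\Sigma}_{XX}-\mathbb{E}[\widetilde{\Sigma}_{XX}]=T_1+T_2+T_3$ with $T_1=\tfrac{1}{n}\sum_i(\widehat{X}_i\widehat{X}_i^{\top}-\mathbb{E}[\widehat{X}\widehat{X}^{\top}])$, $T_2=\tfrac{1}{n}\sum_i(\widehat{X}_i\xi_i^{\top}+\xi_i\widehat{X}_i^{\top})$ and $T_3=\tfrac{1}{n}\sum_i(\xi_i\xi_i^{\top}-\tfrac{\eta_1^2}{4}I_{d_1})$. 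The main obstacle is the variance proxy for $T_1$: the naive deterministic bound $\|\widehat{X}_i\|_2^2\le\tau^2$ only yields $\|\mathbb{E}[(\widehat{X}\widehat{X}^{\top})^2]\|_{\mathrm{op}}\le\tau^2\sqrt{M}$, which after Bernstein does \emph{not} reproduce the advertised rate. I would instead use Cauchy--Schwarz at the level of expectations, $\sup_{u}\mathbb{E}[\|\widehat{X}\|_2^2(\widehat{X}^{\top}u)^2]\le\sqrt{\mathbb{E}[\|X\|_2^4]\,\mathbb{E}[(X^{\top}u)^4]}\le d_1M$, so that matrix Bernstein at deviation proportional to $\delta\log d_1$ yields $\|T_1\|_{\mathrm{op}}\lesssim\delta\sqrt{M}\sqrt{d_1\log d_1/n}$; here the Bernstein linear-in-$B$ term $\tau^2\log d_1/n$ coincides with $\sqrt{M}\sqrt{d_1\log d_1/n}$ under the chosen $\tau$.

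The remaining two terms are analogous. Using the conditional identities for $\xi_i$ one computes variance proxies $\lesssim d_1\eta_1^2\sqrt{M}$ for $T_2$ (via $\mathbb{E}[\|\xi\|_2^2\widehat{X}\widehat{X}^{\top}]=\tfrac{d_1\eta_1^2}{4}\mathbb{E}[\widehat{X}\widehat{X}^{\top}]$ and $\mathbb{E}[\|\widehat{X}\|_2^2]\le d_1\sqrt{M}$) and $\lesssim d_1\eta_1^4$ for $T_3$, with deterministic operator-norm bounds $\lesssim\tau\eta_1\sqrt{d_1}$ and $d_1\eta_1^2$ respectively. Matrix Bernstein then delivers $\|T_2\|_{\mathrm{op}}\lesssim\delta\eta_1M^{1/4}\sqrt{d_1\log d_1/n}\le\delta(\sqrt{M}+\eta_1^2)\sqrt{d_1\log d_1/n}$ by AM--GM, and $\|T_3\|_{\mathrm{op}}\lesssim\delta\eta_1^2\sqrt{d_1\log d_1/n}$, provided $n\gtrsim d_1\log d_1$ so that the linear-in-$B$ Bernstein tails are absorbed. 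Combining the bias with the three Bernstein events by a union bound gives the lemma with probability at least $1-2(d_1^{1-\delta}+d_1^{2-\delta})$.
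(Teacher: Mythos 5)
Your proposal is correct and follows essentially the same route as the paper: the identical decomposition into the truncated sample covariance, the cross terms $\widehat{X}_i\xi_i^{\top}+\xi_i\widehat{X}_i^{\top}$, and the centered $\xi_i\xi_i^{\top}$ term, each controlled by matrix Bernstein with the same variance proxies ($d_1M$, $d_1\eta_1^2\sqrt{M}$, $d_1\eta_1^4$). The only difference is that you unpack the first term into an explicit bias bound $d_1M/\tau^2$ plus a Bernstein fluctuation, whereas the paper invokes Lemma 1 of Li et al.\ (2021) as a black box for that piece; both require the implicit condition $n\gtrsim \delta^2 d_1\log d_1$ to absorb the linear Bernstein tails, which you state and the paper defers to Theorem \ref{theorem1}.
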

	\begin{proof} 
		Let $\xi_{i}:=\widetilde{X}-\widehat{X}_i$, then by Corollary 1 of Chen et al. (2023)\cite{Chen2}, we obtain that for $\forall j \in [d_{1}]$, $|\xi_{ij}|\leq \frac{3}{2}\eta_{1}$.
		Since
		$\widetilde{\Sigma}_{XX}=\frac{1}{n}\sum_{i=1}^n\widetilde{X}_{i}\widetilde{X}^{\top}_{i}-\frac{\eta_{1}^2}{4}I_{d_{1}}=\frac{1}{n}\sum_{i=1}^n(\widehat{X}_i+\xi_{i})(\widehat{X}_i+\xi_{i})^\top-\frac{\eta_{1}^2}{4}I_{d_{1}}=\frac{1}{n}\sum_{i=1}^n\widehat{X}_{i}\widehat{X}_{i}^{\top}+\frac{1}{n}\sum_{i=1}^n\left(\widehat{X}_{i}\xi_{i}^{\top}+\xi_{i}\widehat{X}_{i}^\top\right)+\frac{1}{n}\sum_{i=1}^n\xi_{i}\xi_{i}^{\top}-\frac{\eta_{1}^2}{4}I_{d_{1}}$, we get that
		\begin{equation}\label{eq}
			\left\|\widetilde{\Sigma}_{XX}-\Sigma_{XX}\right\|_{\text{op}}\leq \left\|\widehat{\Sigma}_{n}(\tau)-\Sigma_{XX}\right\|_{\text{op}}\!+\!\left\|\frac{1}{n}\sum_{i=1}^n\left(\widehat{X}_{i}\xi_{i}^{\top}+\xi_{i}\widehat{X}_{i}^\top\right)\right\|_{\text{op}}+\left\|\frac{1}{n}\sum_{i=1}^n\xi_{i}\xi_{i}^{\top}-\frac{\eta_{1}^2}{4}I_{d_{1}}\right\|_{\text{op}}.
		\end{equation}
		For the first term, since
		$\left\|\mathbb{E}\left[\|X_{i}\|_{2}^2X_{i}X_{i}^\top\right]\right\|_{\text{op}}= \sup_{u\in \mathcal{S}^{d_{1}-1}}\sum_{j=1}^{d_{1}}\mathbb{E}\left[X_{ij}^2\left(X_{i}^\top u\right)^2\right]\leq d_{1}M$ and $\tau=\sqrt[4]{\frac{nd_{1}M}{\log(d_{1})}}$, by Lemma 1 of Li et al. (2021)\cite{Li}, we derive that for any $ \delta>2$,
		\begin{equation}\label{eq00}
			\operatorname{P}\left(\left\|\widehat{\Sigma}_n(\tau)-\Sigma_{XX}\right\|_{\mathrm{op}} \leq \delta\sqrt{\frac{M d_{1} \log d_{1}}{n}}\right) \geq 1-2d_{1}^{2-\delta}.    
		\end{equation}
		For the second term, since 
		$\left\|\widehat{X}_{i}\xi_{i}^{\top}\right\|_{\text{op}}=\left\|\widehat{X}_{i}\right\|_{2}\left\|\xi_{i}\right\|_{2}\lesssim \tau \sqrt{d_{1}}\eta_{1}$,
		$$\left\|\mathbb{E}\left[\widehat{X}_{i}\xi_{i}^{\top}\xi_{i}\widehat{X}_{i}^\top\right]\right\|_{\text{op}}=\sup_{u\in \mathbb{S}^{d_{1}-1}}\mathbb{E}\left[\|\xi_{i}\|_{2}^2\left(\widehat{X}_{i}^\top u\right)^2\right]\lesssim d_{1}\eta_{1}^2\sup_{u\in \mathbb{S}^{d_{1}-1}}\mathbb{E}\left[\left(\widehat{X}_{i}^\top u\right)^2\right]\leq d_{1}\eta_{1}^2\sqrt{M},$$
		and 
		$\left\|\mathbb{E}\left[\xi_{i}\widehat{X}_{i}^\top\widehat{X}_{i}\xi_{i}^{\top}\right]\right\|_{\text{op}}=\sup_{u\in \mathbb{S}^{d_{1}-1}}\mathbb{E}\left[\left\|\widehat{X}_{i}\right\|_{2}^2\left(\xi_{i}^\top u\right)^2\right]\lesssim d_{1}\eta_{1}^2\sqrt{M}$, we derive from the matrix Bernstein inequality in Lemma \ref{lemma2} that, for any $\delta>1$,
		\begin{equation}\label{eq2}
			\operatorname{P}\left(\left\|\frac{1}{n}\sum_{i=1}^n\left(\widehat{X}_{i}\xi_{i}^{\top}+\widehat{X}_{i}^\top\xi_{i}\right)\right\|_{\text{op}}\lesssim \eta_{1}\sqrt{\frac{\delta \log(d_{1})d_{1}\sqrt{M}}{n}}\right) \geq 1-d_{1}^{1-\delta}.
		\end{equation}
		As for the third term, because 
		$\|\xi_{i}\xi_{i}^\top\|_{\text{op}}=\left\|\xi_{i}\right\|_{2}^2\leq d_{1} \eta_{1}^2$ and $\left\|\mathbb{E}\left[\left\|\xi_{i}\right\|_{2}^2\xi_{i}\xi_{i}^\top \right]\right\|_{\text{op}}\lesssim d_{1} \eta_{1}^4$, again by Lemma \ref{lemma2}, we then have 
		\begin{equation}\label{eq3}
			\operatorname{P}\left(\left\|\frac{1}{n}\sum_{i=1}^n\xi_{i}\xi_{i}^{\top}-\frac{\eta_{1}^2}{4}I_{d_{1}}\right\|_{\text{op}}\lesssim \eta_{1}^2\sqrt{\frac{\delta d_{1}\log(d_{1})}{n}}\right) \geq 1-d_{1}^{1-\delta}.
		\end{equation}
		Combining (\ref{eq})-(\ref{eq3}), the conclusion can be drawn from the union bound.
	\end{proof}
	Lemma \ref{lemma1} shows that the convergence rate of $\widetilde{\Sigma}_{XX}$ is still optimal after proper data quantization. Based on the above lemma, the following theorem gives the statistical theoretical guarantee for $\widehat{\Theta}$ in (\ref{eq0}).
	\begin{theorem}\label{theorem1}
		Suppose $\text{rank}\left(\Theta_{\star}\right)\leq r$ and there exist two positive constants $\kappa_{0}$ and $R$ such that $\lambda_{\min}(\Sigma_{XX})\geq \kappa_{0}>0$, $\left\|\Theta_{\star}\right\|_{\text{op}}\leq R$.\par
		(a) Further assume that $\sup_{u\in\mathcal{S}^{d_{2}-1}}\mathbb{E}[(Y_{i}^\top u)^4]\leq M < \infty$. Under the condition of the Lemma \ref{lemma1}, for $\forall \delta>2$, by choosing $\varpi \asymp \sqrt[4]{\frac{nd_{2}M}{\log(d_{2})}}$ and $\lambda\asymp \delta(M^\frac{1}{2}+\eta^2)R\sqrt{\frac{d_{\max} \log d_{\max}}{n}}$, there exist positive constants $C_{1}, C_{2}$ only depending on $\kappa_{0}$ such that as long as $n>C_{1}\delta^2(M+\eta_{1}^4)d_{1} \log d_{1}$, we have
		$$
		\operatorname{P}\left(\left\|\widehat{\Theta}-\Theta_{\star}\right\|_{\text{op}}\leq C_{2}\delta\left(M^\frac{1}{2}+\eta^2\right)(R+1)\sqrt{\frac{r\log(d_{\max})d_{\max}}{n}}\right) \leq 1-3d_{\max}^{2-\delta}.
		$$\par
		(b) Suppose that $\exists k>1$ such that $\sup_{u\in\mathcal{S}^{d_{2}-1}}\left(\mathbb{E}[(\mathbb{E}[(\epsilon_{i}^{\top}u)^2|X_{i}])^k]\right)^{1/k}\leq M < \infty$ and $X_{i}$ follows sub-Gaussian distribution with $\mathbb{E}[X_{i}]=0_{d_{1}\times 1}$ and $ \|X_{i}\|_{\psi_{2}}\leq \kappa$. For $\forall \delta>1$, by choosing $\varpi\asymp \sqrt{\frac{n(R+M)}{\log (d_{2})}}$ and $\lambda\asymp \delta(M^\frac{1}{2}+\eta^2)R\sqrt{\frac{d_{\ max} \log d_{\max}}{n}}$, there exist positive constants $c,C_{1}, C_{2}$ only depending on $\kappa_{0},\kappa$ such that as long as $n>C_{1}d_{1}$, we have
		$$
		\operatorname{P}\left(\left\|\widehat{\Theta}-\Theta_{\star}\right\|_{\text{op}}\leq C_{2}\delta\left(M^\frac{1}{2}+\eta^2\right)(R+1)\sqrt{\frac{r\log(d_{\max})d_{\max}}{n}}\right) \leq 1-5d_{\max}^{1-\delta}-4\exp(-cd_{\max})
		$$
		where $d_{\max}:=d_{1}\vee d_{2}$ and $\eta:= \eta_{1}\vee \eta_{2}$.
	\end{theorem}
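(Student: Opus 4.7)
The plan is to apply the Negahban--Wainwright framework for nuclear-norm regularized $M$-estimators. Write $\Delta:=\widehat{\Theta}-\Theta_\star$. Since $\mathbb{E}[\epsilon_i\mid X_i]=0$ yields the population identity $\Sigma_{XY}=\Sigma_{XX}\Theta_\star$, expanding the optimality condition $\widehat{\mathcal{L}}_n(\widehat{\Theta})+\lambda\|\widehat{\Theta}\|_\star\leq\widehat{\mathcal{L}}_n(\Theta_\star)+\lambda\|\Theta_\star\|_\star$ (using symmetry of $\widetilde{\Sigma}_{XX}$) and invoking nuclear--spectral duality produces the basic inequality
$$\langle\Delta,\widetilde{\Sigma}_{XX}\Delta\rangle\leq 2\|G\|_{\text{op}}\|\Delta\|_\star+\lambda\bigl(\|\Theta_\star\|_\star-\|\widehat{\Theta}\|_\star\bigr),\quad G:=(\widetilde{\Sigma}_{XX}-\Sigma_{XX})\Theta_\star-(\widetilde{\Sigma}_{XY}-\Sigma_{XY}).$$

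The first step is to control $\|G\|_{\text{op}}$. The piece $\|(\widetilde{\Sigma}_{XX}-\Sigma_{XX})\Theta_\star\|_{\text{op}}\leq R\|\widetilde{\Sigma}_{XX}-\Sigma_{XX}\|_{\text{op}}$ is furnished by Lemma \ref{lemma1} in case (a) and by a standard sub-Gaussian covariance-concentration bound in case (b). For $\|\widetilde{\Sigma}_{XY}-\Sigma_{XY}\|_{\text{op}}$ I would prove a cross-covariance analogue of Lemma \ref{lemma1}: write $\widetilde{X}_i=\widehat{X}_i+\xi_i$ and $\widetilde{Y}_i=\widehat{Y}_i+\zeta_i$ with $\|\xi_i\|_\infty,\|\zeta_i\|_\infty\lesssim\eta$, expand $\widetilde{X}_i\widetilde{Y}_i^\top$ into a shrinkage-only cross term plus three mixed dither terms, bound the shrinkage term by the cross-covariance truncation estimate of Li et al.\ (2021), and handle the three dither terms by matrix Bernstein exactly as in Lemma \ref{lemma1}. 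In case (b), since $X_i$ is sub-Gaussian I would condition on $\{X_i\}_{i=1}^n$ and reduce to controlling $n^{-1}\sum_i\widetilde{X}_i(\widetilde{Y}_i-\Theta_\star^\top X_i)^\top$, whose summands combine sub-Gaussian rows with a conditional $L^k$ control on $\epsilon$; a truncated matrix Bernstein argument with the choice $\varpi\asymp\sqrt{n(R+M)/\log d_2}$ then balances the shrinkage bias against the stochastic error. Either way the conclusion is $\|G\|_{\text{op}}\lesssim\delta(M^{1/2}+\eta^2)(R+1)\sqrt{d_{\max}\log d_{\max}/n}$, so the prescribed $\lambda$ obeys $\lambda\geq 4\|G\|_{\text{op}}$ on the good event.

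The second step is the standard cone/decomposability argument: with SVD $\Theta_\star=U\Lambda V^\top$ of rank $r$ and the associated subspace $\mathcal{M}$, the identity $\|\Theta_\star+\Delta_{\mathcal{M}^\perp}\|_\star=\|\Theta_\star\|_\star+\|\Delta_{\mathcal{M}^\perp}\|_\star$ combined with the basic inequality and $\lambda\geq 4\|G\|_{\text{op}}$ yields
$$\langle\Delta,\widetilde{\Sigma}_{XX}\Delta\rangle\leq \tfrac{3\lambda}{2}\|\Delta_{\mathcal{M}}\|_\star\leq\tfrac{3\lambda}{2}\sqrt{2r}\,\|\Delta\|_F,\quad \|\Delta_{\mathcal{M}^\perp}\|_\star\leq 3\|\Delta_{\mathcal{M}}\|_\star.$$
The third step is restricted strong convexity: under $n>C_1\delta^2(M+\eta_1^4)d_1\log d_1$ in case (a), Lemma \ref{lemma1} gives $\|\widetilde{\Sigma}_{XX}-\Sigma_{XX}\|_{\text{op}}\leq\kappa_0/2$, hence $\langle\Delta,\widetilde{\Sigma}_{XX}\Delta\rangle\geq(\kappa_0/2)\|\Delta\|_F^2$ globally; in case (b) the same lower bound follows from sub-Gaussian covariance concentration once $n\gtrsim d_1$. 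Combining these yields $\|\Delta\|_{\text{op}}\leq\|\Delta\|_F\lesssim\lambda\sqrt{r}/\kappa_0$, matching the stated rate, and a union bound over the events controlling $\widetilde{\Sigma}_{XX}$, the cross-covariance term and (in case (b)) sub-Gaussian concentration produces the probability statement. The main obstacle is the cross-covariance bound in case (b): only a conditional $L^k$ control on $\epsilon$ is available, so one must condition on $\{X_i\}$ and then apply matrix Bernstein to a sum mixing sub-Gaussian covariates with truncated heavy-tailed noise while simultaneously tracking the shrinkage bias on $Y$ and the $\eta_2$-scale dither perturbation.
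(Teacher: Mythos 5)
Your proposal follows essentially the same route as the paper: the basic inequality from optimality, the decomposition of the gradient term into $\|(\widetilde{\Sigma}_{XX}-\Sigma_{XX})\Theta_\star\|_{\text{op}}+\|\widetilde{\Sigma}_{XY}-\Sigma_{XY}\|_{\text{op}}$ via $\Sigma_{XY}=\Sigma_{XX}\Theta_\star$, matrix Bernstein on the shrinkage and dither cross terms, the curvature bound $\lambda_{\min}(\widetilde{\Sigma}_{XX})\geq\kappa_0/2$, and the low-rank cone argument yielding the $\sqrt{r}$ factor (which the paper imports from Fan et al.\ (2021) as $\|\Delta\|_\star\leq\sqrt{r}\|\Delta\|_F$, and for case (b) the cross-covariance bound from Lemma 5 of Fan et al.\ and Lemma 3 of Chen et al.). The only differences are presentational, so the plan is sound and matches the paper's proof.
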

	\begin{remark}
		Theorem \ref{theorem1} shows that if ignoring the logarithmic factor $\log(d_{\max})$, $\widehat{\Theta}$ in (\ref{eq0}) has almost the same rate of minimax optimal convergence as Fan et al. (2021)\cite{shrinkage} and Chen et al. (2023)\cite{Chen2} under the quantization scenario and heavy-tailed assumption. Note that the quantization parameters $\eta_{1},\eta_{2}$ can be chosen arbitrarily. The larger the parameter level, the more quantize the data but the greater the loss of accuracy. 
	\end{remark}
	\begin{proof}
		(a) By the optimality of $\widehat{\Theta}$, it follows that 
		$\widehat{\mathcal{L}}_{n}(\widehat{\Theta})+\lambda \cdot \|\widehat{\Theta}\|_{\star}\leq \widehat{\mathcal{L}}_{n}(\Theta_{\star})+\lambda \cdot \left\|\Theta_{\star}\right\|_{\star}$. After simple calculations, we show that
		\begin{equation}\label{eq9}
			\begin{aligned}
				\left\langle\widehat{\Theta}-\Theta_\star, \widetilde{\Sigma}_{XX}\left(\widehat{\Theta}-\Theta_\star\right)\right\rangle &\leq 2\left\langle\widetilde{\Sigma}_{X Y}-\widetilde{\Sigma}_{XX}\Theta_\star,\widehat{\Theta}-\Theta_\star\right\rangle +\lambda \cdot \left(\left\|\Theta_\star\right\|_{\star}-\left\|\widehat{\Theta}\right\|_{\star}\right)\\& \leq 2\left\|\widetilde{\Sigma}_{X Y}-\widetilde{\Sigma}_{XX}\Theta_\star\right\|_{\text{op}}\left\|\widehat{\Theta}-\Theta_\star\right\|_{\star}+\lambda\cdot \left\|\Theta_\star-\widehat{\Theta}\right\|_{\star}.
			\end{aligned}
		\end{equation}
		From the condition $\lambda_{\min}(\Sigma_{XX})>\kappa_{0}$ and Lemma \ref{lemma1}, we obtain that when $n\gtrsim \delta^2d_{1}\log(d_{1}) (M+\eta_{1}^4)$,  $\lambda_{\min}\left(\widetilde{\Sigma}_{XX}\right)\geq \kappa_{0}/2$. Therefore, 
		\begin{equation}\label{eq10}
			\begin{aligned}
				\left\langle\widehat{\Theta}-\Theta_\star, \widetilde{\Sigma}_{XX}\left(\widehat{\Theta}-\Theta_\star\right)\right\rangle&= \sum_{i=1}^{d_{2}}\left(\widehat{\theta}^{(i)}-\theta^{(i)}_{\star}\right)\widetilde{\Sigma}_{XX}\left(\widehat{\theta}^{(i)}-\theta^{(i)}_{\star}\right)^{\top}\\& \geq \lambda_{\min}(\widetilde{\Sigma}_{XX})\sum_{i=1}^{d_{2}}\left\|\widehat{\theta}^{(i)}-\theta^{(i)}_{\star}\right\|_{2}^2
				\geq \frac{\kappa_{0}}{2} \left\|\widehat{\Theta}-\Theta_\star\right\|_{F}^2.
			\end{aligned}
		\end{equation}
		On the other hand, 
		$\left\|\widetilde{\Sigma}_{X Y}-\widetilde{\Sigma}_{XX}\Theta_\star\right\|_{\text{op}}
		\leq \underbrace{\left\|\widetilde{\Sigma}_{X Y}- \Sigma_{X Y}\right\|_{\text{op}}}_{T_{1}}+\underbrace{\left\|\left(\widetilde{\Sigma}_{XX}-\Sigma_{XX}\right)\Theta_\star\right\|_{\text{op}}}_{T_{2}}.$
		For 
		$T_{1}$, denoting that 
		$\zeta_{i}:=\widetilde{Y_{i}}-\widehat{Y}_i$, from 
		Chen et al. (2023)\cite{Chen2}, we have the fact that
		$\forall j \in [d_{2}]$, $|\zeta_{ij}|\leq \frac{3}{2}\eta_{2}$ and 
		$\mathbb{E}\left[\xi_{i}\zeta_{i}^{\top}\right]=0_{d_{1}\times d_{2}}$. Besides, 
		$\widetilde{\Sigma}_{XY}=\frac{1}{n}\sum_{i=1}^n\widetilde{X}_{i}\widetilde{Y_{i}}^\top =\frac{1}{n}\sum_{i=1}^n(\widehat{X}_i+\xi_{i})(\widehat{Y}_i+\zeta_{i})^\top=\frac{1}{n}\sum_{i=1}^n\widehat{X}_{i}\widehat{Y}_{i}^{\top}+\frac{1}{n}\sum_{i=1}^n\left(\widehat{X}_{i}\zeta_{i}^{\top}+\xi_{i}\widehat{Y}_{i}^\top\right)+\frac{1}{n}\sum_{i=1}^n\xi_{i}\zeta_{i}^{\top}$. Therefore,
		\begin{equation}\label{eq11}
			\left\|\widetilde{\Sigma}_{XY}\!-\!\Sigma_{XY}\right\|_{\text{op}}\leq \left\|\frac{1}{n}\sum_{i=1}^n\widehat{X}_{i}\widehat{Y}_{i}^{\top}\!-\!\Sigma_{XY}\right\|_{\text{op}}\!+\!\left\|\frac{1}{n}\sum_{i=1}^n\left(\widehat{X}_{i}\zeta_{i}^{\top}+\xi_{i}\widehat{Y}_{i}^\top\right)\right\|_{\text{op}}\!+\!\left\|\frac{1}{n}\sum_{i=1}^n\xi_{i}\zeta_{i}^{\top}\right\|_{\text{op}}.
		\end{equation}
		For the first term on the right-hand side of (\ref{eq11}),
		$$\left\|\frac{1}{n}\sum_{i=1}^n\widehat{X}_{i}\widehat{Y}_{i}^{\top}-\Sigma_{XY}\right\|_{\text{op}}\leq 
		\left\|\frac{1}{n}\sum_{i=1}^n\widehat{X}_{i}\widehat{Y}_{i}^{\top}-\mathbb{E}\left[\widehat{X}_{i}\widehat{Y}_{i}^{\top}\right]\right\|_{\text{op}}+\left\|\mathbb{E}\left[\widehat{X}_{i}\widehat{Y}_{i}^{\top}\right]-\Sigma_{XY}\right\|_{\text{op}}.$$
		For the first term on the right-hand side in the above inequality, we can bound it by Lemma \ref{lemma2}. 
		Let $S_{i}:=\widehat{X}_{i}\widehat{Y}_{i}^{\top}-\mathbb{E}\left[\widehat{X}_{i}\widehat{Y}_{i}^{\top}\right]$. On the one hand,
		$$
		\begin{aligned}
			\left\|S_{i}\right\|_{\text{op}}&\leq \left\|\widehat{X}_{i}\widehat{Y}_{i}^{\top}\right\|_{\text{op}}+\left\|\mathbb{E}[\widehat{X}_{i}\widehat{Y}_{i}^{\top}]\right\|_{\text{op}}=\left\|\widehat{X}_{i}\right\|_{2}\left\|\widehat{Y}_{i}\right\|_{2}+\sup_{\substack{u\in \mathcal{S}^{d_{1}-1}\\v\in \mathcal{S}^{d_{2}-1}}}\mathbb{E}[u^\top\widehat{X}_{i}\widehat{Y}_{i}^{\top}v]\\&\leq \tau \varpi+\sup_{\substack{u\in \mathcal{S}^{d_{1}-1}\\v\in \mathcal{S}^{d_{2}-1}}}\sqrt{\mathbb{E}[(u^\top{X}_{i})^2]\mathbb{E}[({Y}_{i}^{\top}v)^2]}\leq \tau\varpi +\sqrt{M}.\end{aligned}
		$$
		On the other hand, since
		$$\left\|\mathbb{E}[\widehat{X}_{i}\widehat{Y}_{i}^{\top}\widehat{Y}_{i}\widehat{X}_{i}^\top]\right\|_{\text{op}}\leq \sup_{u\in \mathcal{S}^{d_{1}-1}}\sum_{j=1}^{d_{1}}\mathbb{E}\left[Y_{ij}^2(\widehat{X}_{i}^\top u)^2\right]\leq \sup_{u\in \mathcal{S}^{d_{1}-1}}\sum_{j=1}^{d_{1}}\sqrt{\mathbb{E}Y_{ij}^4\mathbb{E}[(X_{i}^\top u)^4]}\leq d_{1}M,$$
		$\left\|\mathbb{E}[\widehat{Y}_{i}\widehat{X}_{i}^\top\widehat{X}_{i}\widehat{Y}_{i}^{\top}]\right\|_{\text{op}}\leq d_{2}M$ and
		$\left\|\mathbb{E}[\widehat{X}_{i}\widehat{Y}_{i}^{\top}]\mathbb{E}[\widehat{Y}_{i}\widehat{X}_{i}^\top]\right\|_{\text{op}}\vee \left\|\mathbb{E}[\widehat{Y}_{i}\widehat{X}_{i}^\top]\mathbb{E}[\widehat{X}_{i}\widehat{Y}_{i}^{\top}]\right\|_{\text{op}}\leq \left\|\mathbb{E}[\widehat{X}_{i}\widehat{Y}_{i}^{\top}]\right\|_{\text{op}}^2 \leq M$,
		thus
		$\left\|\mathbb{E}[S_{i}S_{i}^\top]\right\|_{\text{op}}\vee \left\|\mathbb{E}[S_{i}^\top S_{i}]\right\|_{\text{op}}\leq (d_{\max}+1)M.$ It follows from the matrix Bernstein inequality that 
		$$
		\operatorname{P}\left(\left\|\frac{1}{n}\sum_{i=1}^n\widehat{X}_{i}\widehat{Y}_{i}^{\top}-\mathbb{E}\left[\widehat{X}_{i}\widehat{Y}_{i}^{\top}\right]\right\|_{\text{op}}>t\right)\leq  (d_{1}+d_{2}) \exp \left(\frac{-n t^2/2}{(d_{\max}+1)M+\left(\tau\varpi +\sqrt{M}\right) t/3}\right).
		$$
		Let $t=\sqrt{\frac{\delta Md_{\max}\log(d_{\max})}{n}}$, then
		\begin{equation}\label{eq12}
			\operatorname{P}\left(\left\|\frac{1}{n}\sum_{i=1}^n\widehat{X}_{i}\widehat{Y}_{i}^{\top}-\mathbb{E}\left[\widehat{X}_{i}\widehat{Y}_{i}^{\top}\right]\right\|_{\text{op}}\lesssim\sqrt{\frac{\delta Md_{\max}\log(d_{\max})}{n}}\right)\geq  1-d_{\max}^{1-\delta}.
		\end{equation}
		Since for 
		$\forall u\in \mathcal{S}^{d_{1}-1}$ and $ v\in \mathcal{S}^{d_{2}-1}$, 
		\begin{equation}\label{eq13}
			\begin{aligned}
				&\mathbb{E}\left[u^{\top}\left(\widehat{X}_{i}\widehat{Y}_{i}^{\top}-X_{i}Y_{i}^{\top}\right)v\right]\leq
				\mathbb{E}\left[|u^{\top}X_{i}Y_{i}^{\top}v|1_{\left\{\|X_{i}\|_{2}\geq \tau \text{ or } \|Y_{i}\|_{2}\geq \varpi\right\}}\right]\\&\leq \sqrt{\mathbb{E}\left[(u^{\top}X_{i})^2(Y_{i}^{\top}v)^2\right]\operatorname{P}(\{\|X_{i}\|_{2}\geq \tau\}\cup\{\|Y_{i}\|_{2}\geq \varpi\})}\leq \sqrt{M}\sqrt{\frac{\mathbb{E}\|X_{i}\|_{2}^4}{\tau^4}+\frac{\mathbb{E}\|Y_{i}\|_{2}^4}{\varpi^4}}\\&
				\leq \sqrt{M}\left(d_{1}\sqrt{M}/\tau^2\!+\!d_{2}\sqrt{M}/\varpi^2\right)\lesssim \sqrt{\frac{M}{n}}\left(\sqrt{d_{1}\log(d_{1})}\!+\!\sqrt{d_{2}\log(d_{2})}\right)\lesssim \sqrt{\frac{Md_{\max}\log(d_{\max})}{n}},
			\end{aligned}
		\end{equation}
		where the fourth inequality follows from $C_{r}$ inequality, it shows that 
		$\left\|\mathbb{E}\left[\widehat{X}_{i}\widehat{Y}_{i}^{\top}\right]-\Sigma_{XY}\right\|_{\text{op}}\lesssim \sqrt{\frac{Md_{\max}\log(d_{\max})}{n}}$. The second and third terms of (\ref{eq11}) can be bounded via (\ref{eq2}) and (\ref{eq3}) that 
		\begin{equation}\label{eq4}
			\operatorname{P}\left(\left\|\frac{1}{n}\sum_{i=1}^n\left(\widehat{X}_{i}\zeta_{i}^{\top}+\xi_{i}\widehat{Y}_{i}^{\top}\right)\right\|_{\text{op}}\lesssim (\eta_{1}+\eta_{2})\sqrt{\frac{\delta \log(d_{\max})d_{\max}\sqrt{M}}{n}}\right) \geq 1-d_{\max}^{1-\delta},
		\end{equation}
		\begin{equation}\label{eq6}
			\operatorname{P}\left(\left\|\frac{1}{n}\sum_{i=1}^n\xi_{i}\zeta_{i}^{\top}\right\|_{\text{op}}\lesssim \eta_{1}\eta_{2}\sqrt{\frac{\delta d_{\max}\log(d_{\max})}{n}}\right) \geq 1-d_{\max}^{1-\delta}.
		\end{equation}
		For $T_{2}$, we have that $\left\|\left(\widetilde{\Sigma}_{XX}-\Sigma_{XX}\right)\Theta_\star\right\|_{\text{op}}\leq R\left\|\widetilde{\Sigma}_{XX}-\Sigma_{XX}\right\|_{\text{op}}\leq \delta (M^\frac{1}{2}+\eta_{1}^2)R\sqrt{\frac{ d_{1} \log d_{1}}{n}}$. By the proof of Theorem 1 in Fan et al. (2021)\cite{shrinkage}, it follows that 
		$\left\|\Theta_\star-\widehat{\Theta}\right\|_{\star}\leq \sqrt{r}\left\|\Theta_\star-\widehat{\Theta}\right\|_{\text{op}}$. Combining (\ref{eq9})-(\ref{eq6}) and choosing $\lambda\asymp \delta(M^\frac{1}{2}+\eta^2)R\sqrt{\frac{d_{\max} \log d_{\max}}{n}}$ yields that with probability at least $1-3d_{\max}^{1-\delta}$, 
		$$
		\begin{aligned}
			\frac{\kappa_{0}}{2} \left\|\widehat{\Theta}-\Theta_\star\right\|_{F}^2&\lesssim \delta \left(M^\frac{1}{2}+\eta^2\right)(R+1)\sqrt{\frac{d_{\max} \log d_{\max}}{n}}\left\|\widehat{\Theta}-\Theta_\star\right\|_{\star}+\lambda \left\|\widehat{\Theta}-\Theta_\star\right\|_{\star}\\
			&\lesssim \delta \left(M^\frac{1}{2}+\eta^2\right)(R+1)\sqrt{rd_{\max} \log d_{\max}/n}\left\|\widehat{\Theta}-\Theta_\star\right\|_{F}.
		\end{aligned}
		$$
		(b) The proof is similar with that of (a), except for the upper bound of $\left\|\widetilde{\Sigma}_{XX}-\Sigma_{XX}\right\|_{\text{op}} $and $\left\|\widetilde{\Sigma}_{XY}-\Sigma_{XY}\right\|_{\text{op}}$. Since $X_{i}$ follows sub-Gaussian distribution, by Lemma 4 of Chen et al. (2023)\cite{Chen2}, it follows that there exists a positive constant $c$ such that for $\forall \delta>0$
		$$\operatorname{P}\left(\left\|\widetilde{\Sigma}_{XX}-\Sigma_{XX}\right\|_{\text{op}} \lesssim \frac{(\kappa+c\eta_{1})^2}{\kappa_{0}+\eta_{1}^2/4}\left(\sqrt{\frac{d_1+\delta}{n}}+\frac{d_1+\delta}{n}\right)\right)\geq 1-2\exp(-\delta).
		$$
		On the other hand, letting 
		$\xi_{i}=\widetilde{X}_{i}-X_{i}$, since
		$$\left\|\widetilde{\Sigma}_{XY}-\Sigma_{XY}\right\|_{\text{op}}\leq \left\|\frac{1}{n}\sum_{i=1}^n X_{i}\widehat{Y}_{i}^{\top}-\Sigma_{XY}\right\|_{\text{op}}+\left\|\frac{1}{n}\sum_{i=1}^n{X}_{i}\zeta_{i}^{\top}\right\|_{\text{op}}+\left\|\frac{1}{n}\sum_{i=1}^n\xi_{i}\widehat{Y}_{i}^{\top}\right\|_{\text{op}}+\left\|\frac{1}{n}\sum_{i=1}^n\xi_{i}\zeta_{i}^{\top}\right\|_{\text{op}},$$
		the first and second terms on the right-hand side can be obtained from Lemma 5 of Fan et al. (2021)\cite{shrinkage} and Lemma 3 of Chen et al. (2023)\cite{Chen2}, respectively. i.e.,
		$$
		\operatorname{P}\left(\left\|\frac{1}{n}\sum_{i=1}^n X_{i}\widehat{Y}_{i}^{\top}-\Sigma_{XY}\right\|_{\mathrm{op}} \lesssim \delta\sqrt{\frac{\left(R+M\right)d_{\max} \log \left(d_{\max}\right)}{n}}\right) \geq 1-3d_{\max}^{1-\delta},
		$$
		\begin{equation}\label{eq7}
			\operatorname{P}\left(\left\|\frac{1}{n}\sum_{i=1}^n X_{i}\zeta_{i}^{\top}\right\|_{\text{op}}\lesssim \kappa\eta_{2}\sqrt{\frac{d_{\max}}{n}}\right) \geq 1-2\exp \left(-cd_{\max}\right).
		\end{equation}
		Finally, the upper bounds on the last two terms are the same as (\ref{eq4}) and (\ref{eq6}).
	\end{proof}
	\subsection{Low-rank linear regression model for heavy-tailed matrix responses}
	Next, we study low-rank linear regression with heavy-tailed matrix-type responses. 
	\begin{equation}\label{eq1}
		Y=\sum_{k=1}^s x_{(k)} \Theta_\star^{(k)}+E,
	\end{equation}
	where $\left\{\Theta_\star^{(k)}\in \mathbb{R}^{d_{1}\times d_{2}}\right\}_{k\in [s]}$ is $s$ parameter matrices to be estimated, $X=\left(x_{(1)}, x_{(2)}, \cdots ,x_{(s)} \right)^{\top}$ is $s$-dimensional covariate. $Y, E \in \mathbb{R}^{d_{1}\times d_{2}}$ are the response matrix and the random noise matrix, respectively and $\mathbb{E}[E|X]=0_{d_{1}\times d_{2}}$. When $\min\left\{s, d_{1}, d_{2}\right\}$ is relatively large, in order to efficiently estimate the parameters $\Theta_{\star}:=\left(\Theta_\star^{(1)}, \Theta_\star^{(2)},\cdots, \Theta_\star^ {(s)}\right)$, we need to make some structural assumptions on the parameter matrix: (1) $\sum_{k=1}^s \text{rank}\left(\Theta_\star^{(k)}\right)\leq r\ll s\min\{d_{1},d_{2}\}$;  (2) $\max_{k\in [s]} \left\|\Theta_\star^{(k)}\right\|_{\text{op}}\leq R=O(1)$ where $R$ is a positive constant. Assumption (1) requires that for $\forall k \in [s],$  $\Theta_{\star}^{(k)}$ is of low rank. Assumption (2) requires that $\left\|\Theta_{\star}^{(k)}\right\|_{\text{op}}$ should be relatively small, and a similar condition can be found in Chen et al (2023)\cite{Chen2}.\par
	Kong et al (2020)\cite{Kong}, Hao et al (2021)\cite{Hao} and Chen et al (2023)\cite{Chen2} have studied such multivariate regression models with matrix-type responses. The purpose of this work is to deal with the heavy-tailed data, which often occurs in the big data and high-dimensional data. We transform the expected $\ell_{2}$ loss into the following expression.
	$$
	\begin{aligned}
		&\mathbb{E}\ell(\Theta)=\mathbb{E}\operatorname{tr}\left[\left( Y-\sum_{k=1}^s x_{(k)} \Theta^{(k)}\right)\left(Y-\sum_{k=1}^s x_{(k)} \Theta^{(k)}\right)^{\top}\right]\\=&\mathbb{E}\operatorname{tr}\left[YY^{\top}\right]+
		\mathbb{E}\operatorname{tr}\left[\left(\sum_{k=1}^s x_{(k)} \Theta^{(k)}\right)\left(\sum_{k=1}^s x_{(k)} \Theta^{(k)}\right)^{\top}\right]-2\sum_{k=1}^s\mathbb{E}\left[x_{(k)}\left\langle Y, \Theta^{(k)}\right\rangle\right]\\=& \sum_{i,j\in [s]}\mathbb{E}\left[x_{(i)}x_{(j)}\right]\left\langle\Theta^{(i)},\Theta^{(j)}\right\rangle -2\sum_{k=1}^s\left\langle \mathbb{E}\left[x_{(k)}Y\right], \Theta^{(k)}\right\rangle=:\left\langle \Sigma_{XX},  \Pi \right\rangle -2\sum_{k=1}^s\left\langle\Sigma_{x_{(k)}Y}, \Theta^{(k)}\right\rangle,
	\end{aligned}
	$$
	where $\Sigma_{XX}:=\mathbb{E}\left[XX^{\top}\right], \Sigma_{x_{(k)}Y}:=\mathbb{E}[x_{(k)}Y]$ and $(\Pi)_{i,j}:=\left\langle\Theta^{(i)},\Theta^{(j)}\right\rangle$. We omit $\mathbb{E}\operatorname{tr}\left[YY^{\top}\right]$ which is not related to $\Theta:=\left(\Theta^{(1)}, \Theta^{(2)},\cdots, \Theta^{(s)}\right)$, because it does not affect the evaluation of the parameters. \par
	To reduce the impact of heavy-tailed data on parameter estimation, we replace $\Sigma_{x_{(k)}Y}$ with some
	tailed-robust matrix estimator. Specifically, let $\{(X_{i}, Y_{i})\}_{i=1}^n$ be $n$ i.i.d. samples from (\ref{eq1}), the covariate $X_{i}$ follows the sub-Gaussian distribution and the random error matrix $E_{i}$ follows the heavy-tailed distribution, then let 
	$\widehat{H}_{11}^{(k)} \in \mathbb{R}^{d_1 \times d_1}, \widehat{H}_{22}^{(k)} \in \mathbb{R}^{d_2 \times d_2}, \widehat{\Sigma}_{x_{(k)}Y} \in \mathbb{R}^{d_1 \times d_2}$ such that 
	$$\left(\begin{array}{cc}
		\widehat{H}_{11}^{(k)} & \widehat{\Sigma}_{x_{(k)}Y} \\
		\widehat{\Sigma}_{x_{(k)}Y}^{\top} & \widehat{H}_{22}^{(k)}
	\end{array}\right)=\frac{1}{n}\sum_{i=1}^n\psi_{\tau_{k}}\left(\mathcal{F}\left(x_{i(k)}Y_{i}\right)\right), \text{where}\quad 
	\mathcal{F}\left(x_{i(k)}Y_{i}\right):=\left(\begin{array}{cc}
		0_{d_{1}\times d_{1}} & x_{i(k)}Y_{i} \\
		x_{i(k)}Y_{i}^{\top} & 0_{d_{2}\times d_{2}}
	\end{array}\right).
	$$
	where $\psi_{\tau}(x):=\text{sign}(x)\cdot(|x|\wedge \tau)$. We choose $\widehat{\Sigma}_{x_{(k)}Y}$ as the estimator of $\Sigma_{x_{(k)}Y}$, where $\{\tau_{k}\}_{k\in [s]}$ is pre-determined thresholds to balance the tail-robustness with the bias of the estimation. This robust technology was first proposed by Minsker (2018)\cite{Minsker1}. On the other hand, we use the sample covariance matrix $\widehat{\Sigma}_{XX}:=\frac{1}{n}\sum_{i \in [n]} X_{i}X_{i}^{\top}$ as the estimator of $\Sigma_{XX}$. Therefore, we define the robust empirical $\ell_{2}$ loss as $\widehat{\ell}_{n}(\Theta):=\left\langle \widehat{\Sigma}_{XX}, \Pi \right\rangle -2\sum_{k=1}^s\left\langle\widehat{\Sigma}_{x_{(k)}Y}, \Theta^{(k)}\right\rangle$ and solve the following optimization problem to obtain the $M$-estimate of $\Theta_{\star}$.
	\begin{equation}\label{eq01}
		\widehat{\Theta}=\underset{\Theta\in \mathbb{R}^{d_{1}\times sd_{2}}}{\arg\min}\left\{\widehat{\ell}_{n}(\Theta)+\lambda \cdot \sum_{i=1}^s\left\|\Theta^{(i)}\right\|_{\star}\right\},
	\end{equation}
	where the penalty term $\lambda \cdot\sum_{i=1}^s\left\|\Theta^{(i)}\right\|_{\star}$ is added to recover the low-rank parameter $\Theta_{\star}$.\par
	The following theorem gives the theoretical guarantee for $\widehat{\Theta}$.
	\begin{theorem}\label{theorem2}
		Suppose $s\asymp \sqrt{d_{1}+d_{2}}$ and the parameter matrix $\Theta_{\star}$ satisfies the above structural conditions. $X_{i}$ follows the sub-Gaussian distribution with $\|X_{i}\|_{\psi_{2}}\leq \kappa$ and $\lambda_{\min}(\Sigma_{XX})\geq \kappa_{0}>0$. If $\exists \ell >1$ such that $\sup_{\substack{u\in \mathcal{S}^{d_{1}-1}\\v\in \mathcal{S}^{d_{2}-1}}}\sqrt[\ell]{\mathbb{E}\left(\mathbb{E}\left[(u^{\top}E_{i}v)^2|X_{i}\right]\right)^\ell}\leq M<\infty$, by choosing $\tau_{k}\asymp\sigma_{k}\sqrt{n/\log(d_{1}+d_{2})}$ and $\lambda \asymp R\delta \sqrt{\frac{M\left(d_1+d_ 2\right) \log \left(d_1+d_2\right)}{n}}$ for $k \in [s]$, there exist positive constants $C_{1}, C_{2}$ only depending on $\kappa, \kappa_{0}$ such that as long as $n> C_{1} (s+\delta \log(s))$, we have for $\forall \delta>\frac{5}{2}$,
		$$\operatorname{P}\left(\left\|\widehat{\Theta}-\Theta_{\star}\right\|_{F}\leq C_{2} R\delta \sqrt{\frac{Mr\left(d_1+d_2\right) \log \left(d_1+d_2\right)}{n}}\right)\geq 1-2(d_{1}+d_{2})^{5/2-\delta}-2(d_{1}+d_{2})^{-\delta/2}$$
		where $\sigma_{k}^2:= \left\|\mathbb{E}\left[x_{i(k)}^2Y_{i}Y_{i}^{\top}\right]\right\|_{\text{op}}\vee \left\|\mathbb{E}\left[x_{i(k)}^2Y_{i}^{\top}Y_{i}\right]\right\|_{\text{op}}$.
	\end{theorem}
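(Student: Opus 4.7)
The plan is to adapt the analysis of Theorem~\ref{theorem1} to the block-structured parameter $\Theta=(\Theta^{(1)},\ldots,\Theta^{(s)})$: start from optimality of $\widehat\Theta$ in (\ref{eq01}), expand the loss gap using $(\Pi_\Theta)_{ij}=\langle\Theta^{(i)},\Theta^{(j)}\rangle$, and isolate a block-quadratic restricted strong convexity (RSC) term together with a first-order gradient-error term, then close with the usual low-rank cone argument. Writing $\Delta^{(k)}:=\widehat\Theta^{(k)}-\Theta_\star^{(k)}$ and using the population identity $\Sigma_{x_{(k)}Y}=\sum_j(\Sigma_{XX})_{kj}\Theta_\star^{(j)}$, the basic inequality rearranges to
$$\sum_{i,j}(\widehat\Sigma_{XX})_{ij}\langle\Delta^{(i)},\Delta^{(j)}\rangle\;\le\; 2\sum_{k=1}^s\langle G^{(k)},\Delta^{(k)}\rangle\;+\;\lambda\sum_{k=1}^s\bigl(\|\Theta_\star^{(k)}\|_{\star}-\|\widehat\Theta^{(k)}\|_{\star}\bigr),$$
where the gradient-error matrix is $G^{(k)}:=\widehat\Sigma_{x_{(k)}Y}-\sum_j(\widehat\Sigma_{XX})_{kj}\Theta_\star^{(j)}$.

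For RSC, because $\{X_i\}$ is i.i.d.\ sub-Gaussian in $\mathbb{R}^s$, a sample-covariance concentration bound (as in Lemma~4 of Chen et al.\ (2023)\cite{Chen2}) gives $\lambda_{\min}(\widehat\Sigma_{XX})\ge\kappa_0/2$ with probability $1-2(d_1+d_2)^{-\delta/2}$ once $n\gtrsim s+\delta\log s$, and by the block structure this translates directly into $\sum_{i,j}(\widehat\Sigma_{XX})_{ij}\langle\Delta^{(i)},\Delta^{(j)}\rangle\ge(\kappa_0/2)\|\Delta\|_F^2$. Next, I split $G^{(k)}=(\widehat\Sigma_{x_{(k)}Y}-\Sigma_{x_{(k)}Y})+\sum_j\bigl((\Sigma_{XX})_{kj}-(\widehat\Sigma_{XX})_{kj}\bigr)\Theta_\star^{(j)}$. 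For the first summand, the central tool is Minsker's (2018)\cite{Minsker1} concentration inequality applied to the symmetric dilation $\mathcal F(x_{i(k)}Y_i)$: its variance proxy is $\|\mathbb E\mathcal F(x_{i(k)}Y_i)^2\|_{\mathrm{op}}=\sigma_k^2$, and choosing $\tau_k\asymp\sigma_k\sqrt{n/\log(d_1+d_2)}$ yields $\|\widehat\Sigma_{x_{(k)}Y}-\Sigma_{x_{(k)}Y}\|_{\mathrm{op}}\lesssim\sigma_k\sqrt{\delta\log(d_1+d_2)/n}$ with the polynomial-tail probability of the theorem. Sub-Gaussianity of $X_i$ together with the conditional $\ell$-th-moment hypothesis on $u^\top E_i v$ will give $\sigma_k^2\lesssim(R^2+M)(d_1+d_2)$ uniformly in $k$. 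For the second summand, row-$k$ extraction and $\|\Theta_\star^{(j)}\|_{\mathrm{op}}\le R$ bound it by $R\sqrt{s}\,\|\widehat\Sigma_{XX}-\Sigma_{XX}\|_{\mathrm{op}}\lesssim R\kappa^2\sqrt{(d_1+d_2)/n}$, of the same order as the Minsker term since $s\asymp\sqrt{d_1+d_2}$. A union bound over $k\in[s]$ costs only an extra $\sqrt{\log(d_1+d_2)}$, producing $\max_k\|G^{(k)}\|_{\mathrm{op}}\lesssim R\delta\sqrt{M(d_1+d_2)\log(d_1+d_2)/n}$.

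Finally, choosing $\lambda\ge 2\max_k\|G^{(k)}\|_{\mathrm{op}}$ and applying the usual rank-$r_k$ projection $\Delta^{(k)}=\Delta_1^{(k)}+\Delta_2^{(k)}$ with $\mathrm{rank}(\Delta_1^{(k)})\le 2r_k$ and $\|\Theta_\star^{(k)}\|_\star-\|\widehat\Theta^{(k)}\|_\star\le\|\Delta_1^{(k)}\|_\star-\|\Delta_2^{(k)}\|_\star$ yields the cone bound $\sum_k\|\Delta_2^{(k)}\|_\star\le 3\sum_k\|\Delta_1^{(k)}\|_\star$, so by Cauchy-Schwarz on $\sum_k\sqrt{r_k}\,\|\Delta^{(k)}\|_F$ one gets $\sum_k\|\Delta^{(k)}\|_\star\le 4\sqrt{2r}\,\|\Delta\|_F$. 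Combining with the RSC lower bound yields $(\kappa_0/2)\|\Delta\|_F^2\lesssim\lambda\sqrt{r}\,\|\Delta\|_F$, which rearranges to the claimed rate.

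The hardest step is the control of $\max_k\|G^{(k)}\|_{\mathrm{op}}$: I must verify that Minsker's spectral-truncation bound genuinely applies to $\mathcal F(x_{i(k)}Y_i)$ under only the conditional $\ell$-th-moment hypothesis on $(u^\top E_iv)^2$, and must produce a clean, $k$-uniform upper bound on $\sigma_k^2$. The sub-Gaussianity of $X_i$ is what lets me absorb moments of $x_{i(k)}$ and of the signal $\sum_j x_{i(j)}\Theta_\star^{(j)}$, so that the $(R^2+M)(d_1+d_2)$ variance bound propagates cleanly into the advertised confidence level $1-2(d_1+d_2)^{5/2-\delta}-2(d_1+d_2)^{-\delta/2}$.
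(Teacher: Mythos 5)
Your proposal follows essentially the same route as the paper's proof: the basic inequality from optimality, restricted strong convexity via sub-Gaussian concentration of $\widehat\Sigma_{XX}$, the split of the gradient error into the Minsker-truncation term and the $R\sqrt{s}\,\|\widehat\Sigma_{XX}-\Sigma_{XX}\|_{\mathrm{op}}$ term, the variance-proxy bound $\sigma_k^2\lesssim(R^2s^2+M)\asymp(R^2(d_1+d_2)+M)$ obtained by expanding $Y_i=\sum_j x_{i(j)}\Theta_\star^{(j)}+E_i$ under sub-Gaussianity and the conditional $\ell$-th moment hypothesis, a union bound over $k\in[s]$ (which in the paper shifts the failure probability exponent from $2-\delta$ to $5/2-\delta$ via $s\asymp\sqrt{d_1+d_2}$, rather than adding a $\sqrt{\log}$ factor as you state), and the low-rank cone argument. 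The proposal is correct and matches the paper's argument in all essential respects.
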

	\begin{remark}
		Theorem \ref{theorem2} states that if ignoring the logarithmic factor $\log(d_{1}+d_{2})$, $\widehat{\Theta}$ in (\ref{eq01}) has the minimax optimal convergence rate under the $(2+\epsilon)$-order moment random noise assumption.
	\end{remark}
	\begin{remark}
		An interesting idea is that if the covariates are also heavy-tailed, whether it is possible to employ the tail robust covariance estimator of $\Sigma_{XX}$ in subsection \ref{subsec1} to address the parameter estimation problem for heavy tailed covariates and response variables? Through extensive experimental attempts, we found that this estimator is not superior to traditional least squares estimate although it possesses theoretical  feasibility. Therefore, constructing an effective robust estimator is a topic worth further consideration.
	\end{remark}
	\begin{proof}
		By the optimality of $\widehat{\Theta}$, it follows that $\widehat{\ell}_{n}(\widehat{\Theta})+\lambda \cdot \sum_{i=1}^s\left\|\widehat{\Theta}^{(i)}\right\|_{\star}\leq \widehat{\ell}_{n}(\Theta_{\star})+\lambda \cdot \sum_{i=1}^s\left\|\Theta^{(i)}_{\star}\right\|_{\star}$. By simple calculations, we derive that
		\begin{equation}\label{eq14}
			\begin{gathered}
				\left\langle\widetilde{\Delta} \widetilde{\Delta}^{\top}, \widehat{\Sigma}_{XX}\right\rangle \leq 2\sum_{k=1}^s\left\langle\widehat{\Sigma}_{x_{(k)} Y},\widehat{\Theta}^{(k)}-\Theta_\star^{(k)}\right\rangle -2\sum_{i=1}^s\left\langle\sum_{j=1}^s\left(\widehat{\Sigma}_{XX}\right)_{i,j} \Theta_\star^{(j)}, \widehat{\Theta}^{(i)}-\Theta_\star^{(i)}\right\rangle \\
				+\lambda \cdot \sum_{i=1}^s\left(\left\|\Theta_\star^{(i)}\right\|_{\star}-\left\|\widehat{\Theta}^{(i)}\right\|_{\star}\right)
			\end{gathered} 
		\end{equation}
		where $\widetilde{\Delta}:=\left[\text{vec}\left(\widehat{\Theta}^{(1)}-\Theta_{\star}^{(1)}\right), \cdots, \text{vec}\left(\widehat{\Theta}^{(s)}-\Theta_{\star}^{(s)}\right)\right]^{\top}$. Since $\left\|X_{i}\right\|_{\psi_{2}}\leq \kappa$, by Exercise 4.7.3 in Vershynin (2018)\cite{Vershynin}, we have
		$$
		\operatorname{P}\left(\left\|\widehat{\Sigma}_{XX}-\Sigma_{XX}\right\|_{\mathrm{op}} \leq C\kappa^2\left(\sqrt{\frac{s+\delta\log(s)}{n}}+\frac{s+\delta\log(s)}{n}\right)\left\|\Sigma_{XX}\right\|_{\text{op}}\right)\geq 1-2s^{-\delta}.
		$$
		As long as $n\gtrsim s+\delta \log(s)$, we have $\lambda_{\min}\left(\widehat{\Sigma}_{XX}\right)\geq\frac{1}{2}\lambda_{\min}(\Sigma_{XX})>\frac{\kappa_0}{2}$. Therefore,
		$\left\langle\widetilde{\Delta} \widetilde{\Delta}^{\top}, \widehat{\Sigma}_{XX}\right\rangle \geq \frac{\kappa_0}{2}\|\widetilde{\Delta}\|_F^2=\frac{\kappa_0}{2}\|\Delta\|_F^2$. 
		Since $\Sigma_{x_{(k)} Y}=\sum_{j=1}^s \left(\Sigma_{XX}\right)_{k,j}\Theta_{\star}^{(j)}$, it follows that
		\begin{equation}\label{eq15}
			\begin{aligned}
				&\left|\sum_{k=1}^s\left\langle\widehat{\Sigma}_{x_{(k)} Y},\widehat{\Theta}^{(k)}-\Theta_\star^{(k)}\right\rangle -\sum_{k=1}^s\left\langle\sum_{j=1}^s\left(\widehat{\Sigma}_{XX}\right)_{k,j} \Theta_\star^{(j)}, \widehat{\Theta}^{(k)}-\Theta_\star^{(k)}\right\rangle\right|\\\leq& \left|\sum_{k=1}^s\left\langle\widehat{\Sigma}_{x_{(k)} Y}-\Sigma_{x_{(k)}Y},\widehat{\Theta}^{(k)}-\Theta_\star^{(k)}\right\rangle\right|+\left|\sum_{k=1}^s\left\langle\sum_{j=1}^s\left(\left(\widehat{\Sigma}_{XX}\right)_{k,j}-\left(\Sigma_{XX}\right)_{k,j}\right) \Theta_\star^{(j)}, \widehat{\Theta}^{(k)}-\Theta_\star^{(k)}\right\rangle\right| \\\leq& \left(\max _{k \in[s]}\underbrace{\left\|\widehat{\Sigma}_{x_{(k)} Y}-\Sigma_{x_{(k)}Y}\right\|_{\text{op}}}_{T_{1}}+\max _{k \in[s]}\underbrace{\left\|\sum_{j=1}^s\left(\widehat{\Sigma}_{XX}-\Sigma_{XX}\right)_{k,j} \Theta_\star^{(j)}\right\|_{\text{op}}}_{T_2}\right)\left(\sum_{k=1}^s\left\|\widehat{\Theta}^{(k)}-\Theta_\star^{(k)}\right\|_{\star}\right).
			\end{aligned} 
		\end{equation}
		For $T_{1}$, 
		$$
		\begin{gathered}
			\left\|\mathbb{E}\left[x_{i(k)}^2Y_{i}Y_{i}^{\top}\right]\right\|_{\text{op}}=\left\|\mathbb{E}\left[x_{i(k)}^2\left(\sum_{k=1}^sx_{i(k)}\Theta_{\star}^{(k)}+E_{i}\right)\left(\sum_{k=1}^sx_{i(k)}\Theta_{\star}^{(k)}+E_{i}\right)^{\top}\right]\right\|_{\text{op}}\leq 
			\underbrace{\left\|\mathbb{E}\left[x_{i(k)}^2E_{i}E_{i}^{\top}\right]\right\|_{\text{op}}}_{I_{1}}\\+\underbrace{\left\|\mathbb{E}\left[x_{i(k)}^2\left(\sum_{k=1}^sx_{i(k)}\Theta_{\star}^{(k)}\right)\left(\sum_{k=1}^sx_{i(k)}\Theta_{\star}^{(k)}\right)^{\top}\right]\right\|_{\text{op}}}_{I_{2}}+2\underbrace{\left\|\mathbb{E}\left[x_{i(k)}^2\left(\sum_{k=1}^sx_{i(k)}\Theta_{\star}^{(k)}\right)E_{i}^{\top}]\right]\right\|_{\text{op}}}_{I_{3}}.
		\end{gathered}
		$$
		For $I_{1}$, we have that
		$$
		\begin{aligned}
			I_{1}&=\sup_{u\in \mathcal{S}^{d_{1}-1}}\mathbb{E}\left[x_{i(k)}^2u^{\top}E_{i}E_{i}^{\top}u\right]=\sup_{u\in \mathcal{S}^{d_{1}-1},v\in \mathcal{S}^{d_{2}-1}}\mathbb{E}\left[x_{i(k)}^2(u^{\top}E_{i}v)^2\right]\\&\leq \left(\mathbb{E}|x_{i(k)}|^{\frac{2\ell}{\ell-1}}\right)^\frac{\ell-1}{\ell}\sup_{u\in \mathcal{S}^{d_{1}-1},v\in \mathcal{S}^{d_{2}-1}}\sqrt[\ell]{\mathbb{E}\left(\mathbb{E}\left[(u^{\top}E_{i}v)^2|X_{i}\right]\right)^\ell}
			\leq \left(\kappa_0^2 \frac{2\ell}{\ell-1}\right) M.
		\end{aligned}
		$$
		For $I_{2}$, since $\mathbb{E}\left[x_{i(k)}^2x_{i(l)}x_{i(m)}\right]\leq \sqrt{\mathbb{E}\left(x_{i(k)}^4\right)\sqrt{\mathbb{E}\left(x_{i(l)}^4\right)\mathbb{E}\left(x_{i(m)}^4\right)}}\leq 16\kappa_{0}^4$, we obtain that
		$$
		\begin{aligned}
			I_{2}&=\left\|\sum_{l=1}^s\sum_{m=1}^s\mathbb{E}\left[x_{i(k)}^2x_{i(l)}x_{i(m)}\right]\Theta_{\star}^{(l)}\Theta_{\star}^{(m)\top}\right\|_{\text{op}}\leq 16\kappa_{0}^4\sum_{l=1}^s\sum_{m=1}^s\left\|\Theta_{\star}^{(l)}\Theta_{\star}^{(m)\top}\right\|_{\text{op}}\\&\leq 16\kappa_{0}^4\sum_{l=1}^s\sum_{m=1}^s\left\|\Theta_{\star}^{(l)}\right\|_{\text{op}}\left\|\Theta_{\star}^{(m)}\right\|_{\text{op}}\leq 16s^2\kappa_{0}^4R^2.
		\end{aligned}
		$$
		For $I_{3}$, we derive that
		$$
		\begin{aligned}
			I_{3}&\leq \sum_{l=1}^s\left\|\mathbb{E}\left[x_{i(k)}^2x_{i(l)}\Theta_{\star}^{(k)}E_{i}^
			{\top}\right]\right\|_{\text{op}}=\sum_{i=1}^s\sup_{u\in \mathcal{S}^{d_{1}-1},v \in \mathcal{S}^{d_{2}-1}}\mathbb{E}\left[x_{i(k)}^2x_{i(l)}u^{\top}\Theta_{\star}^{(k)}vv^{\top}E_{i}^{\top}u\right]\\&\leq \sum_{i=1}^s\left(\left\|\Theta_{\star}^{(k)}\right\|_{\text{op}}\sup_{\substack{u\in \mathcal{S}^{d_{1}-1}\\v \in \mathcal{S}^{d_{2}-1}}}\sqrt{\mathbb{E}\left[x_{i(k)}^4\right]\mathbb{E}\left[x_{i(l)}^2\left(uE_{i}v^{\top}\right)^2\right]}\right)\leq 4\kappa_{0}^2I_{1}^{\frac{1}{2}}\sum_{i=1}^s\left\|\Theta_{\star}^{(k)}\right\|_{\text{op}}\leq 4s\kappa_{0}^3\sqrt{\frac{2\ell M}{\ell-1}}R.
		\end{aligned}
		$$
		Therefore, $\left\|\mathbb{E}\left[x_{i(k)}^2Y_{i}Y_{i}^{\top}\right]\right\|_{\text{op}}\leq \kappa_0^2\left(\frac{2\ell M}{\ell-1}+16s^2\kappa_{0}^2R^2+8s\kappa_{0}\sqrt{\frac{2\ell M}{\ell-1}}R\right)=\kappa_{0}^2\left(4\kappa_{0}sR+\sqrt{\frac{2\ell M}{\ell-1}}\right)^2$. By the same way, $\left\|\mathbb{E}\left[x_{i(k)}^2Y_{i}^{\top}Y_{i}\right]\right\|_{\text{op}}\leq \kappa_{0}^2\left(4\kappa_{0}sR+\sqrt{\frac{2\ell M}{\ell-1}}\right)^2$. From Corollary 3.1 in Minsker (2018)\cite{Minsker1} with shrinkage function $\psi_{\tau_{k}}$, we obtain that
		$$\operatorname{P}\left(\left\|\widehat{\Sigma}_{x_{(k)} Y}-\Sigma_{x_{(k)}Y}\right\|_{\text{op}} \geq t\right) \leq 2(d_{1}+d_{2}) \exp \left(-n t/\tau_{k}+\frac{n\sigma_{k}^2}{2\tau_{k}^2}\right). $$
		Further choosing $\tau_{k}\asymp \sigma_{k}\sqrt{n/\log(d_{1}+d_{2})}$ and $t=\sigma\delta\sqrt{\frac{\log(d_{1}+d_{2})}{n}},$ under the condition of $s\asymp \sqrt{d_{1}+d_{2}}$, with probability at least $1-2\left(d_1+d_2\right)^{2-\delta}$, we have that
		\begin{equation}\label{eq16}
			\left\|\widehat{\Sigma}_{x_{(k)} Y}-\Sigma_{x_{(k)}Y}\right\|_{\text{op}} \leq  C\sigma\delta \sqrt{\frac{\log \left(d_1+d_2\right)}{n}}\leq  CR\delta \sqrt{\frac{M\left(d_1+d_2\right) \log \left(d_1+d_2\right)}{n}}.
		\end{equation}
		On the other hand,
		\begin{equation}\label{eq17}
			\begin{gathered}
				T_{2}\leq\sum_{j=1}^s\left(\left|\left(\widehat{\Sigma}_{XX}-\Sigma_{XX}\right)_{i,j}\right| \left\|\Theta_\star^{(j)}\right\|_{\text{op}}\right) \leq \left(\sum_{j=1}^s \left(\widehat{\Sigma}_{XX}-\Sigma_{XX}\right)_{i,j}^2\right)^{1 / 2}\left(\sum_{j=1}^s\left\|\Theta_\star^{(j)}\right\|_{\text{op}}^2\right)^{1 / 2}\\ \leq  R\sqrt{s}\left\|\widehat{\Sigma}_{XX}-\Sigma_{XX}\right\|_{\text{op}}\lesssim  R K^2 \left\|\Sigma_{XX}\right\|_{\text{op}}\sqrt{\frac{s^2}{n}}\lesssim
				R K^2\left\|\Sigma_{XX}\right\|_{\text{op}}\sqrt{ (d_{1}+d_{2}) /n}.
			\end{gathered}
		\end{equation}
		By the proof of Theorem 3 in Chen at al. (2023)\cite{Chen2}, we obtain that $\sum_{i=1}^s\left\|\Theta_\star^{(i)}-\widehat{\Theta}^{(i)}\right\|_{\star}\leq \sqrt{r}\left\|\Theta_\star^{(i)}-\widehat{\Theta}^{(i)}\right\|_{\text{op}}$. Therefore, combining (\ref{eq14})-(\ref{eq17}) and choosing $\lambda\asymp R\delta\sqrt{\frac{M\left(d_1+d_2\right) \log \left(d_1+d_2\right)}{n}}$, with probability at least $1-2s(d_{1}+d_{2})^{2-\delta}-2s^{-\delta}$, 
		$$
		\begin{gathered}
			\frac{\kappa_0}{2}\|\Delta\|_F^2\lesssim R\delta \sqrt{\frac{M\left(d_1+d_2\right) \log \left(d_1+d_2\right)}{n}}\cdot \sum_{i=1}^s\left\|\widehat{\Theta}^{(i)}-\Theta_\star^{(i)}\right\|_{\star}+\lambda \cdot \sum_{i=1}^s\left(\left\|\Theta_\star^{(i)}\right\|_{\star}-\left\|\widehat{\Theta}^{(i)}\right\|_{\star}\right)\\
			\lesssim R\delta \sqrt{\frac{Mr\left(d_1+d_2\right) \log \left(d_1+d_2\right)}{n}}\cdot \sum_{i=1}^s\left\|\widehat{\Theta}^{(i)}-\Theta_\star^{(i)}\right\|_{\star}=R\delta \sqrt{\frac{Mr\left(d_1+d_2\right) \log \left(d_1+d_2\right)}{n}}\left\|\Delta\right\|_{F}.
		\end{gathered}$$
		Hence, $\operatorname{P}\left(\left\|\widehat{\Theta}-\Theta_{\star}\right\|_{F}\lesssim R\delta \sqrt{\frac{Mr\left(d_1+d_2\right) \log \left(d_1+d_2\right)}{n}}\right)\geq 1-2(d_{1}+d_{2})^{5/2-\delta}-2(d_{1}+d_{2})^{-\delta/2}$.
	\end{proof}
	
	\begin{lemma}\label{lemma2}
		(Theorem 6.1.1 in Tropp (2015)\cite{Tropp}) 
		Consider $n$ independent random matrices $\{S_{i}\in \mathbb{R}^{d_{1}\times d_{2}}: \mathbb{E}[S_{i}]=0_{d_{1}\times d_{2}}, \|S_{i}\|_{\text{op}}\leq L, a.s. \}_{i\in [n]}$. For $\forall t>0$, 
		$$
		\operatorname{P}\left(\left\|\frac{1}{n}\sum_{i=1}^n S_{i}\right\|_{\text{op}}\geq t\right)\leq (d_{1}+d_{2})\exp\left(\frac{-nt^2/2}{\nu+Lt/3}\right)
		$$
		where $\nu:= \left\|\frac{1}{n}\sum_{i=1}^n\mathbb{E}[S_{i}S_{i}^{\top}]\right\|_{\text{op}}\vee \left\|\frac{1}{n}\sum_{i=1}^n\mathbb{E}[S_{i}^{\top}S_{i}]\right\|_{\text{op}}$.
	\end{lemma}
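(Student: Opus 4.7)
The plan is to follow the standard Laplace-transform / matrix-cumulant approach to matrix Bernstein, in four stages: (i) reduce to the Hermitian case by dilation, (ii) Markov in the matrix exponential, (iii) subadditivity of matrix cgf via Lieb's theorem, and (iv) a scalar Bernstein-type bound for each summand followed by optimization in the dual parameter.

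First I would \emph{Hermitianize}. For each $S_i \in \mathbb{R}^{d_1\times d_2}$, define the dilation
\[
Y_i \;:=\; \mathcal{H}(S_i) \;=\; \begin{pmatrix} 0_{d_1\times d_1} & S_i \\ S_i^{\top} & 0_{d_2\times d_2}\end{pmatrix} \in \mathbb{R}^{(d_1+d_2)\times(d_1+d_2)}.
\]
Then $Y_i$ is symmetric, $\mathbb{E}[Y_i]=0$, $\|Y_i\|_{\text{op}}=\|S_i\|_{\text{op}}\le L$, and $Y_i^2=\mathrm{diag}(S_iS_i^{\top},S_i^{\top}S_i)$, so
\[
\Bigl\|\tfrac{1}{n}\sum_{i=1}^{n}\mathbb{E}[Y_i^{2}]\Bigr\|_{\text{op}} \;=\; \Bigl\|\tfrac{1}{n}\sum_{i=1}^{n}\mathbb{E}[S_iS_i^{\top}]\Bigr\|_{\text{op}} \;\vee\; \Bigl\|\tfrac{1}{n}\sum_{i=1}^{n}\mathbb{E}[S_i^{\top}S_i]\Bigr\|_{\text{op}} \;=\; \nu.
\]
Moreover $\|\sum_i S_i\|_{\text{op}}=\lambda_{\max}(\sum_i Y_i)$, so it suffices to bound the largest eigenvalue of $\sum_i Y_i$ in dimension $d_1+d_2$.

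Next, the Laplace transform step. For any $\theta>0$, by monotonicity of $x\mapsto e^{\theta x}$ on the spectrum and the inequality $e^{\lambda_{\max}(A)}\le \mathrm{tr}\,e^{A}$ for symmetric $A$,
\[
\mathrm{P}\!\left(\lambda_{\max}\!\Bigl(\sum_i Y_i\Bigr)\ge nt\right) \;\le\; e^{-\theta nt}\,\mathbb{E}\!\left[\mathrm{tr}\exp\!\Bigl(\theta\sum_{i=1}^{n}Y_i\Bigr)\right].
\]
The central difficulty is controlling the trace-exponential of a sum of non-commuting random matrices, and here the workhorse is \textbf{Lieb's concavity theorem}: for fixed symmetric $H$, $A\mapsto \mathrm{tr}\exp(H+\log A)$ is concave on the positive cone. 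Applying Jensen iteratively over $i=n,n-1,\ldots,1$ (conditioning on the first $i-1$ summands) yields the key subadditivity
\[
\mathbb{E}\,\mathrm{tr}\exp\!\Bigl(\theta\sum_{i=1}^{n}Y_i\Bigr) \;\le\; \mathrm{tr}\exp\!\Bigl(\sum_{i=1}^{n}\log\mathbb{E}\bigl[e^{\theta Y_i}\bigr]\Bigr).
\]
This is the single deepest ingredient and the step I expect to be the main obstacle, since the non-commutative cgf $\log\mathbb{E}[e^{\theta Y_i}]$ cannot be handled by a direct Taylor expansion as in the scalar case.

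Finally, I would bound each matrix cgf. Using $\|Y_i\|_{\text{op}}\le L$, $\mathbb{E}[Y_i]=0$, and the scalar estimate $e^{x}\le 1+x+\frac{x^{2}/2}{1-|x|/3}$ for $|x|\le 3$, lifted to matrices via the spectral calculus, one gets for $0<\theta<3/L$,
\[
\log\mathbb{E}\bigl[e^{\theta Y_i}\bigr] \;\preceq\; \mathbb{E}\bigl[e^{\theta Y_i}\bigr]-I \;\preceq\; \frac{\theta^{2}/2}{1-\theta L/3}\,\mathbb{E}[Y_i^{2}].
\]
Summing, using monotonicity of $\mathrm{tr}\exp$ under the Loewner order, the bound on $\sum_i \mathbb{E}[Y_i^{2}]$ computed above, and $\mathrm{tr}\exp(A)\le (d_1+d_2)\exp(\lambda_{\max}(A))$, the tail becomes
\[
\mathrm{P}\!\left(\Bigl\|\tfrac{1}{n}\textstyle\sum_i S_i\Bigr\|_{\text{op}}\ge t\right) \;\le\; (d_1+d_2)\exp\!\Bigl(-\theta n t + \frac{n\theta^{2}\nu/2}{1-\theta L/3}\Bigr).
\]
Optimizing in $\theta$ by the Bernstein choice $\theta=t/(\nu+Lt/3)$ (which lies in $(0,3/L)$) collapses the exponent to $-\dfrac{nt^{2}/2}{\nu+Lt/3}$, giving the stated inequality.
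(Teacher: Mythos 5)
Your proposal is correct and is essentially the standard argument of Tropp's Theorem 6.1.1, which the paper simply cites without proof: Hermitian dilation, the Laplace-transform master bound via Lieb's concavity theorem, the Bernstein bound on each matrix cgf, and optimization at $\theta = t/(\nu + Lt/3)$. All the details check out, including the bookkeeping of the $1/n$ normalization in $\nu$.
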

	\section{Numerical simulations}\label{3}
	In this section, we perform numerical simulations of the theoretical results from the previous section to illustrate the validity of the estimators. The results are based on 200 independent replications.
	\subsection{Verification of Theorem \ref{theorem1}}
	To facilitate the simulation, let $d_{1}=d_{2}=d$ and design $\Theta_{\star}=V_{7}V_{7}^\top$, where $V_{7}$ is the top 7 eigenvectors of $\frac{1}{n}\sum_{i=1}^{100}Z_{i}Z_{i}^{\top}$ and $\{Z_{ i}\}_{i=1}^{100}$ is 100 i.i.d. $d$-dimensional standard Gaussian random vectors. For the two cases of Theorem \ref{theorem1}, we consider the following two types of sample distributions.\par
	(a) Each component of the covariate $X_{i}$ is i.i.d. sampled in $\mathcal{N}(0,1)$. Each component of the random error term $\epsilon_{i}$ obeys $t_{2.1}/5$ distribution; \par
	(b) The covariate $X_{i}\sim T_{d}(0_{d\times 1},I_{d},6)$  and the random noise term $\epsilon_{i}\sim T_{d}(0_{d\times 1},I_{d},4.1)$.
	
	The experimental results of (a) are shown in Figure \ref{fig1}, where the dashed line represents the theoretical convergence rate $O(n^{-1/2})$. $\eta_{1}=\eta_{2}=0$ means that no quantization for data. the experimental results on the left panel represents the case under the quantization of $\{(X_{i}, Y_{i})\}_{i=1}^n$, while the right panel represent the case under the quantization of only $\{Y_{i}\}_{i=1}^n$. It can be seen that the estimation error enlarges as $\eta_{1},\eta_{2}$ increases gradually. Meanwhile, for fixed $\eta_{1}$ and $\eta_{2}$, each line is almost parallel to the imaginary line, which verifies the conclusion of Theorem \ref{theorem1}. \par
	The experimental result of (b) is shown in Figure \ref{fig2}, which is consistent with the features of Figure \ref{fig1}. Note that for the choice of the parameter $\tau$, we use the adaptive equation of Li et al. (2021)\cite{Li} and Ke et al. (2019)\cite{Ke} to determine its level,
	$$\left\|\frac{1}{\tau^{4}} \sum_{i=1}^{n}\left(\left\|X_{i}\right\|_{2}^{2} \bigwedge \tau^2\right)^{2} \frac{X_{i} X_{i}^{\top}}{\left\|X_{i}\right\|_{2}^{2}}\right\|_{\text{op}}=\log(2d)+\log(n).$$
	Therefore, $\tau \asymp\sqrt[4]{\frac{ndM}{\log(2d)+\log(n)}}$. The conclusion of (b) in Theorem \ref{theorem1} becomes $\left\|\widehat{\Theta}-\Theta_{\star}\right\|_{F}\lesssim \sqrt{\frac{rd(\log(d)+\log(n))}{n}}\asymp_{r ,d} \sqrt{\frac{\log(n)}{n}}$ with overwhelming probability. The dashed line in Figure \ref{fig2} represents $O\left(\sqrt{\log(n)/n}\right)$.
	
	\begin{figure}[H]
		\centering
		\pgfplotsset{every axis x label/.append style={at={(0.5,0.05)}}, every axis y label/.append style={at={(0.08,0.5)}}}
		\begin{minipage}{0.4\linewidth}
			\begin{tikzpicture}
				\begin{axis}[title={(1) $d=80, t_{2.1}/5$ noise}, title style={font=\tiny},font=\tiny,xlabel={$\log(n)$}, ylabel={$\log\|\widehat{\Theta}-\Theta_{\star}\|_{F}$},height=5.6cm, width=6.5cm]
					\addplot[
					color=blue,
					mark=+,
					]
					coordinates {
						(6.620073,-0.2341787)(6.907755, -0.3673022)(7.313220, -0.5518211)(7.600902, -0.6852131)(7.824046, -0.7836071)(8.070906,-0.8964641)
					};
					\addplot[
					color=red,
					mark=triangle,
					]
					coordinates {
						(6.620073,-0.2119177)(6.907755,-0.348616)(7.313220,-0.531895)(7.600902, -0.6628703)(7.824046, -0.7599927)(8.070906,-0.8717246) };
					\addplot[
					color=green,
					mark=square,
					]
					coordinates {
						(6.620073,-0.1564213)(6.907755,-0.2857797)(7.313220,-0.471012)(7.600902,-0.5996385)(7.824046, -0.6983708)(8.070906,-0.8031264) };
					\addplot[
					color=orange,
					mark=o
					]
					coordinates {
						(6.620073,-0.05795987)(6.907755,-0.1838115)(7.313220,-0.370616)(7.600902,-0.4944374)(7.824046,-0.5983702)(8.070906,-0.6977806)
					};
					\addplot [
					domain=6.6:7.8,
					samples=100,
					color=black,
					densely dashed]
					{-0.5*x+3};
					
					\legend{$\eta_{1}\text{,}\eta_{2}=0$, $\eta_{1}\text{,}\eta_{2}=0.2$, $\eta_{1}\text{,}\eta_{2}=0.4$, $\eta_{1}\text{,}\eta_{2}=0.6$}
				\end{axis}
			\end{tikzpicture}
		\end{minipage}
		\begin{minipage}{0.4\linewidth}
			\begin{tikzpicture}
				\begin{axis}[title={(2) $d=50, t_{2.1}/5$ noise}, title style={font=\tiny},font=\tiny,xlabel={$\log(n)$}, ylabel={$\log\|\widehat{\Theta}-\Theta_{\star}\|_{F}$},height=5.6cm, width=6.5cm]
					\addplot[
					color=blue,
					mark=+,
					]
					coordinates {
						(6.907755, -0.6617404)(7.313220, -0.8558635)(7.600902, -0.9801574)(7.824046, -1.0829448 )(8.070906,-1.197041)(8.29405,-1.290008 )
					};
					\addplot[
					color=red,
					mark=triangle,
					]
					coordinates {
						(6.907755, -0.6520427)(7.313220, -0.8423659)(7.600902, -0.9695743)(7.824046, -1.0707237)(8.070906, -1.184060)(8.29405,-1.281405) };
					\addplot[
					color=green,
					mark=square,
					]
					coordinates {
						(6.907755, -0.6158401)(7.313220, -0.8069973)(7.600902,-0.9360040)(7.824046, -1.0351001)(8.070906,-1.146687)(8.29405, -1.247112) };
					\addplot[
					color=orange,
					mark=o
					]
					coordinates {
						(6.907755,-0.5524442)(7.313220, -0.7474034)(7.600902, -0.8784678)(7.824046,-0.9819397)(8.070906, -1.087757)
						(8.29405, -1.191038)};
					\addplot[
					color=violet,
					mark=x
					]
					coordinates {
						(6.907755,-0.4818295)(7.313220, -0.6727380)(7.600902, -0.8019017)(7.824046, -0.9031760)(8.070906, -1.0105214)(8.29405, -1.1173149)};  
					\addplot [
					domain=6.9:8,
					samples=100,
					color=black,
					densely dashed]
					{-0.5*x+2.7};
					\legend{$\eta_{2}=0$, $\eta_{2}=0.2$, $\eta_{2}=0.4$, $\eta_{2}=0.6$, $\eta_{2}=0.8$}
				\end{axis}
			\end{tikzpicture}
		\end{minipage}
		\caption{(1): bounded moment response under complete quantization; (2) bounded moment response under partial quantization. The $x$-axis and $y$-axis represent logarithmic sample size and $\log\left\|\widehat{\Theta}-\Theta^{*}\right\|_{F}$.}
		\label{fig1}
	\end{figure}
	\begin{figure}[H]
		\centering
		\pgfplotsset{every axis x label/.append style={at={(0.5,0.05)}}, every axis y label/.append style={at={(0.08,0.5)}}}
		\begin{minipage}{0.4\linewidth}
			\begin{tikzpicture}
				\begin{axis}[title={(1) $d=50,T_{6}$ covariate, $T_{4.1}$ noise}, title style={font=\tiny},font=\tiny,xlabel={$\log(n)$}, ylabel={$\log\|\widehat{\Theta}-\Theta_{\star}\|_{F}$},height=5.6cm, width=6.5cm]
					\addplot[
					color=blue,
					mark=+,
					]
					coordinates { 
						(6.907755, 0.1332580)(7.313220, -0.029276981)(7.600902, -0.14734457)(7.824046, -0.2416567)(8.070906,-0.3450322)(8.29405,-0.4452155)
					};
					\addplot[
					color=red,
					mark=triangle,
					]
					coordinates {
						(6.907755, 0.1424376)(7.313220, -0.022209955)(7.600902, -0.14013570)(7.824046,  -0.2331091)(8.070906, -0.3395635)(8.29405,-0.4340184) };
					\addplot[
					color=green,
					mark=square,
					]
					coordinates {
						(6.907755, 0.1665537)(7.313220, 0.001250595)(7.600902,-0.11670876)(7.824046,  -0.2060363)(8.070906,-0.3172059)(8.29405, -0.4105111) };
					
					\addplot[
					color=orange,
					mark=o
					]
					coordinates {
						(6.907755,0.2051726)(7.313220, 0.040435593)(7.600902, -0.07482272)(7.824046,-0.1730983)(8.070906, -0.277922)(8.29405, -0.3786748)};
					\addplot [
					domain=6.9:8,
					samples=100,
					color=black,
					densely dashed]
					{-0.5*x+0.5*ln(x)+2.53};
					\legend{$\eta_{1}\text{,}\eta_{2}=0$, $\eta_{1}\text{,}\eta_{2}=0.2$, $\eta_{1}\text{,}\eta_{2}=0.4$, $\eta_{1}\text{,}\eta_{2}=0.6$}
				\end{axis}
			\end{tikzpicture}
		\end{minipage}
		\begin{minipage}{0.4\linewidth}
			\begin{tikzpicture}
				
				\begin{axis}[title={(2) $d=60, T_{6}$ covariate, $T_{4.1}$ noise}, title style={font=\tiny},font=\tiny,xlabel={$\log(n)$}, ylabel={$\log\|\widehat{\Theta}-\Theta_{\star}\|_{F}$},height=5.6cm, width=6.5cm]
					\addplot[
					color=blue,
					mark=+,
					]
					coordinates { 
						(7.313220, 0.1019765)(7.600902, -0.0126141626)(7.824046, -0.09961785)(8.070906,-0.2037297)(8.29405,-0.2954001)(8.517193, -0.3914163)
					};
					\addplot[
					color=red,
					mark=triangle,
					]
					coordinates {
						(7.313220, 0.1033506)(7.600902, -0.0072881)(7.824046, -0.099921)(8.070906,-0.200664)(8.29405,-0.29342)(8.517193, -0.38773) };
					\addplot[
					color=green,
					mark=square,
					]
					coordinates {
						(7.313220, 0.1107)(7.600902, 0.000551)(7.824046,-0.091652)(8.070906, -0.1929)(8.29405,-0.28467)(8.517193, -0.381225) };
					
					\addplot[
					color=orange,
					mark=o
					]
					coordinates {
						(7.313220,0.131036)(7.600902, 0.017919)(7.824046, -0.074388)(8.070906,-0.17842)(8.29405, -0.270653)
						(8.517193, -0.367202)};
					\addplot[
					color=violet,
					mark=x
					]
					coordinates {
						(7.313220,0.1536517)(7.600902, 0.03704448)(7.824046, -0.055469)(8.070906,-0.157288)(8.29405, -0.2530)(8.517193, -0.34768)};  
					\addplot [
					domain=7.32:8.26,
					samples=100,
					color=black,
					densely dashed]
					{-0.5*x+0.5*ln(x)+2.7};
					\legend{$\eta_{2}=0$, $\eta_{2}=0.2$, $\eta_{2}=0.4$, $\eta_{2}=0.6$,$\eta_{2}=0.8$}
				\end{axis}
			\end{tikzpicture}
		\end{minipage}
		\caption{(1): bounded moment design under complete quantization; (2) bounded moment design under partial quantization. The $x$-axis and $y$-axis represent logarithmic sample size and $\log\left\|\widehat{\Theta}-\Theta^{*}\right\|_{F}$.}
		\label{fig2}
	\end{figure}
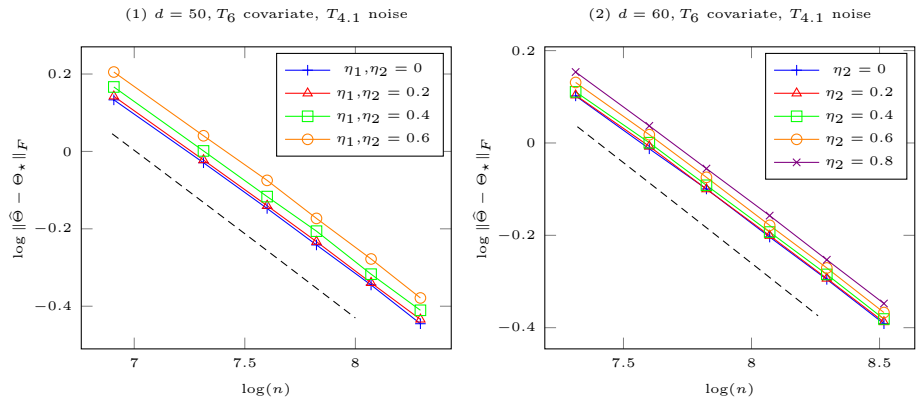
	\subsection{Verification of Theorem \ref{theorem2}}
	To simulate low-rank linear model with matrix-type responses, $s$ low-rank parameter matrices $\left\{\Theta_{\star}^{(k)}\right\}_{k\in [s]}$ are generated as follows: let $d_{1}=d_{2}$ and for any $k\in [s]$, we generate $\Theta_{1}^{(k) } \in \mathbb{R}^{d_1 \times r}$ and $\Theta_{2}^{(k)} \in \mathbb{R}^{d_2\times r}$, where each term of $\Theta_{1}^{(k)} $ and $\Theta_{2}^{(k)}$ i.i.d. follows the standard normal distribution. Then we get $\Theta^{(k)}_{\star}=\Theta_{1}^{(k)}\Theta_{2}^{(k)}/\left\|\Theta_{1}^{(k)}\Theta_{2}^{(k)}\right\|_{F}$. The covariate $X\sim \mathcal{N}(0_{s\times 1}, I_{s})$. For the random error term $E$, we consider two cases: (a) $\text{vec}(E) \sim T(0_{d_{1}d_{2}\times 1}, I_{d_{1}\star d_{2}}, 2.1)/10$; (b) $E \sim \left(ZZ^{\top}-5/3I_{d_{1}}\right)/10$, where $Z\sim T(0_{d_{1}\times 1}, I_{d_{1}},5)$. \par
	Since $\tau_{k}\asymp\sigma_{k}\sqrt{n/(\log(d_{1}+d_{2})+\log(n))}$ and $\sigma_{k}^2=\left\|\mathbb{E}\left[x_{i(k)}^2Y_{i}Y_{i}^{\top}\right]\right\|_{\text{op}}\vee \left\|\mathbb{E}\left[x_{i(k)}^2Y_{i}^{\top}Y_{i}\right]\right\|_{\text{op}}=\left\|\mathbb{E}\left[\mathcal{F}\left(x_{i(k)}Y_{i}\right)^2\right]\right\|_{\text{op}}$, We select $\tau_{k}=c\sqrt{\left\|\mathbb{E}\left[\mathcal{F}\left(x_{i(k)}Y_{i}\right)^2\right]\right\|_{\text{op}}n/\left(\log(d_{1}+d_{2})+\log(n)\right)}$, given the constant $c>0$. $\mathbb{E}\left[\mathcal{F}\left(x_{i(k)}Y_{i}\right)^2\right]$ can be estimated by the truncated robust estimator $\frac{1}{n} \sum_{i=1}^n\psi_{\tau_{k}}(\mathcal{F}(x_{i(k)}Y_{i}))^2$, because the heavy tail of $\{Y_{i}\}_{i=1}^n$ leads the traditional moment estimator $\frac{1}{n}\sum_{i=1}^n \mathcal{F}(x_{i(k)}Y_{i})^2$ to overestimate the true value. Therefore, for $\forall k\in [s]$, solve the following $s$ equations to obtain the parameters $\{\tau_{k}\}_{k\in [s]}$:
	$$\left\|\frac{1}{\tau_{k}^2} \sum_{i=1}^n\psi_{\tau_{k}}(\mathcal{F}(x_{i(k)}Y_{i}))^2\right\|_{\text{op}}=4\log (d_{1}+d_{2})+4\log (n),$$
	where $c=1/4$.\par
	The results are showed by Figure \ref{fig3}, where ``Robust'' represents the proposed estimator and ``Standard'' stands for the the traditional least squares estimator. For each case, our robust estimator is better than the original regularized least squares estimate which is unstable and has significant fluctuations in estimation errors.
	\begin{figure}
		\centering
		\begin{minipage}{0.49\linewidth}
			\includegraphics[width=2.95in,height=2.6in]{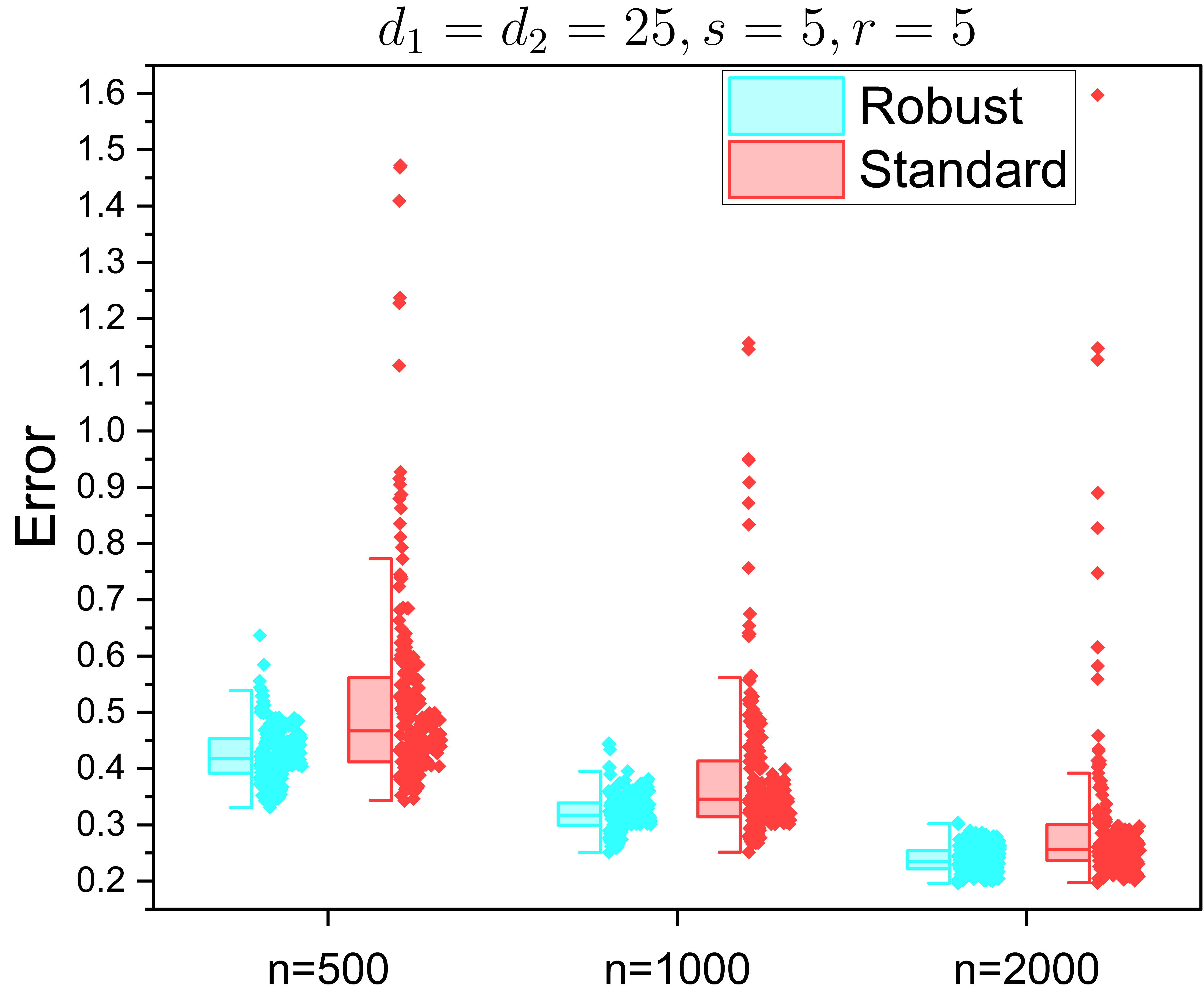}
		\end{minipage}
		\begin{minipage}{0.49\linewidth}
			\includegraphics[width=2.95in,height=2.6in]{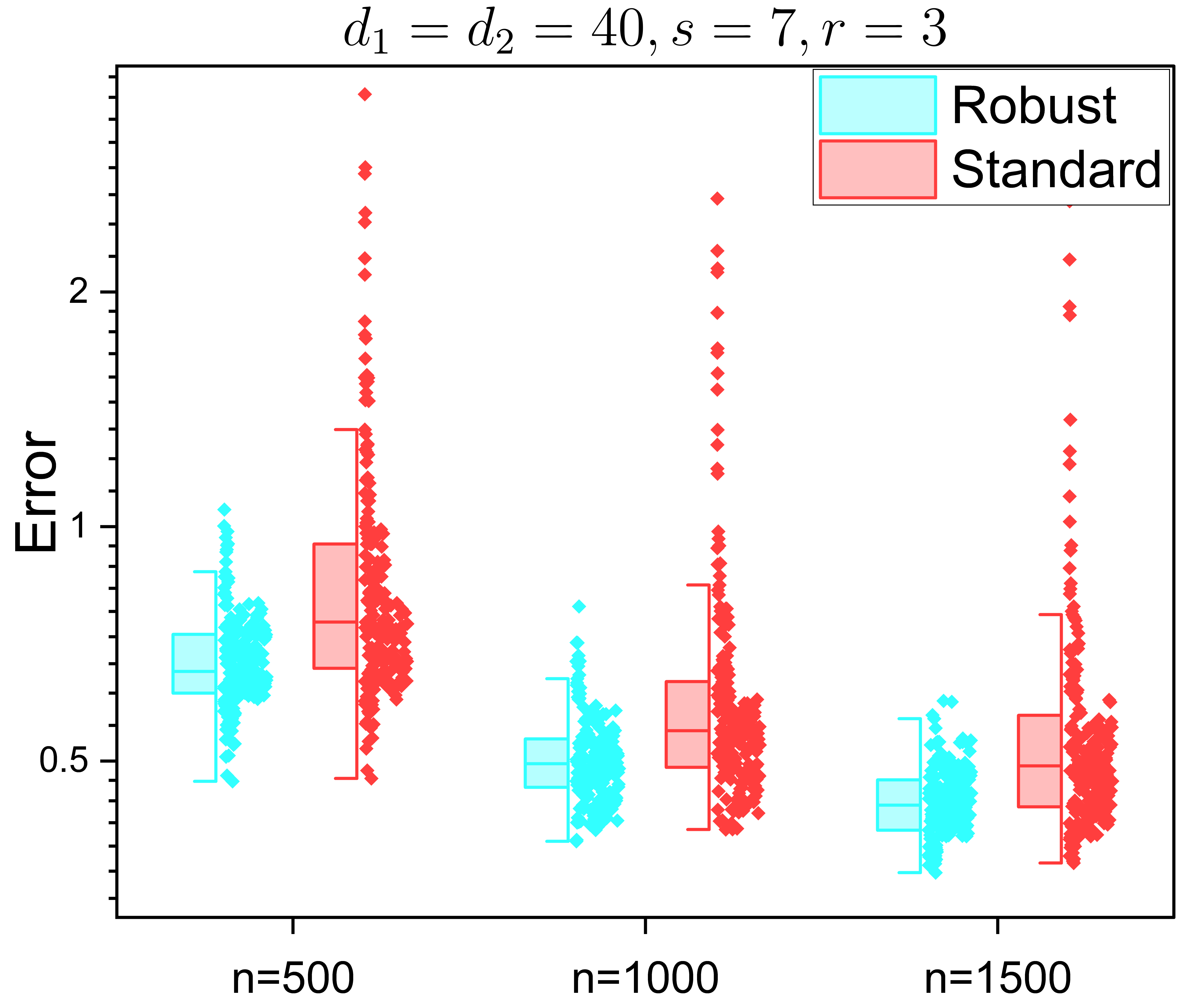}
			
		\end{minipage}
		
		\centering
		\begin{minipage}{0.49\linewidth}
			\includegraphics[width=2.95in,height=2.6in]{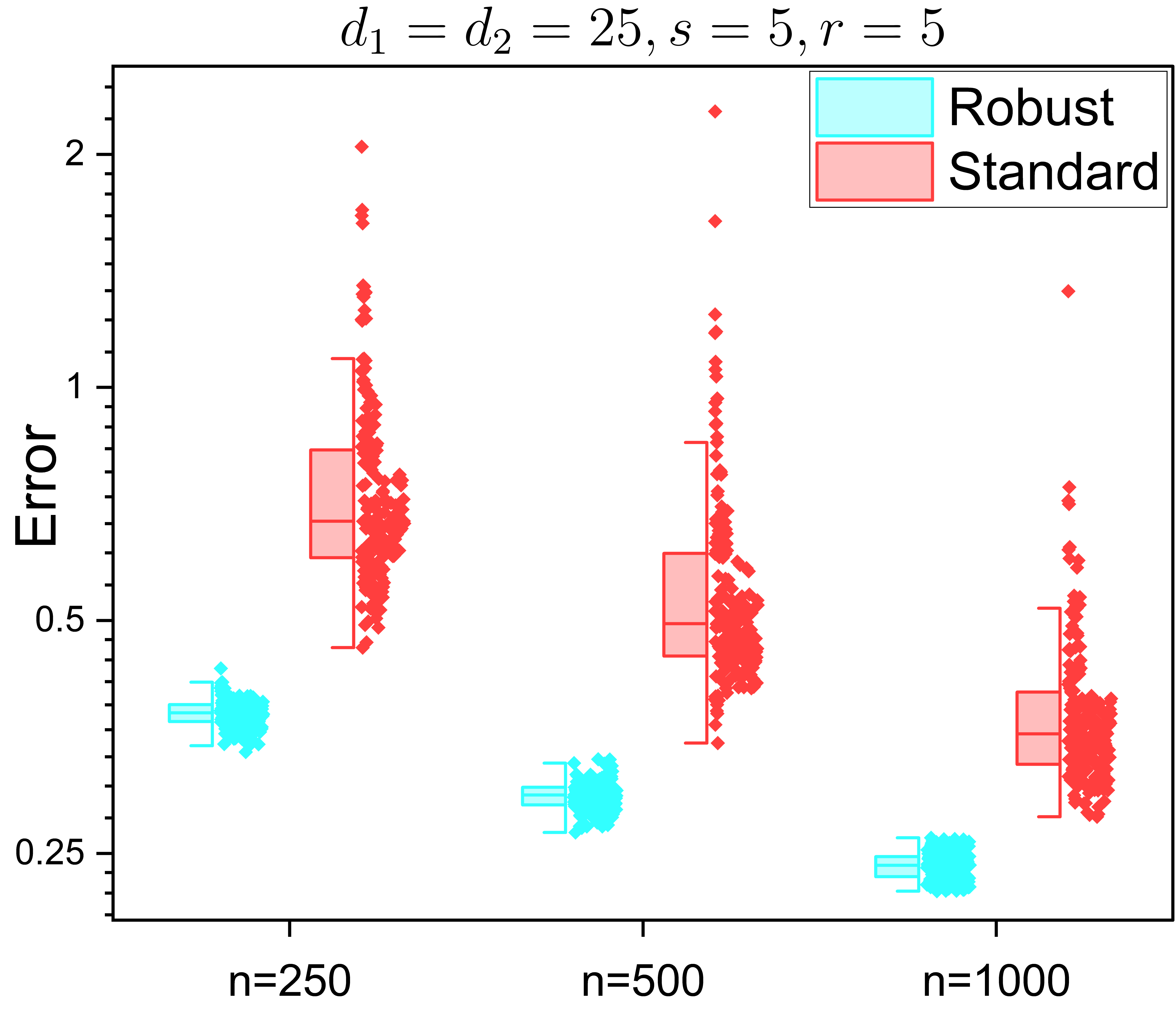}
		\end{minipage}
		\begin{minipage}{0.49\linewidth}
			\includegraphics[width=2.95in,height=2.6in]{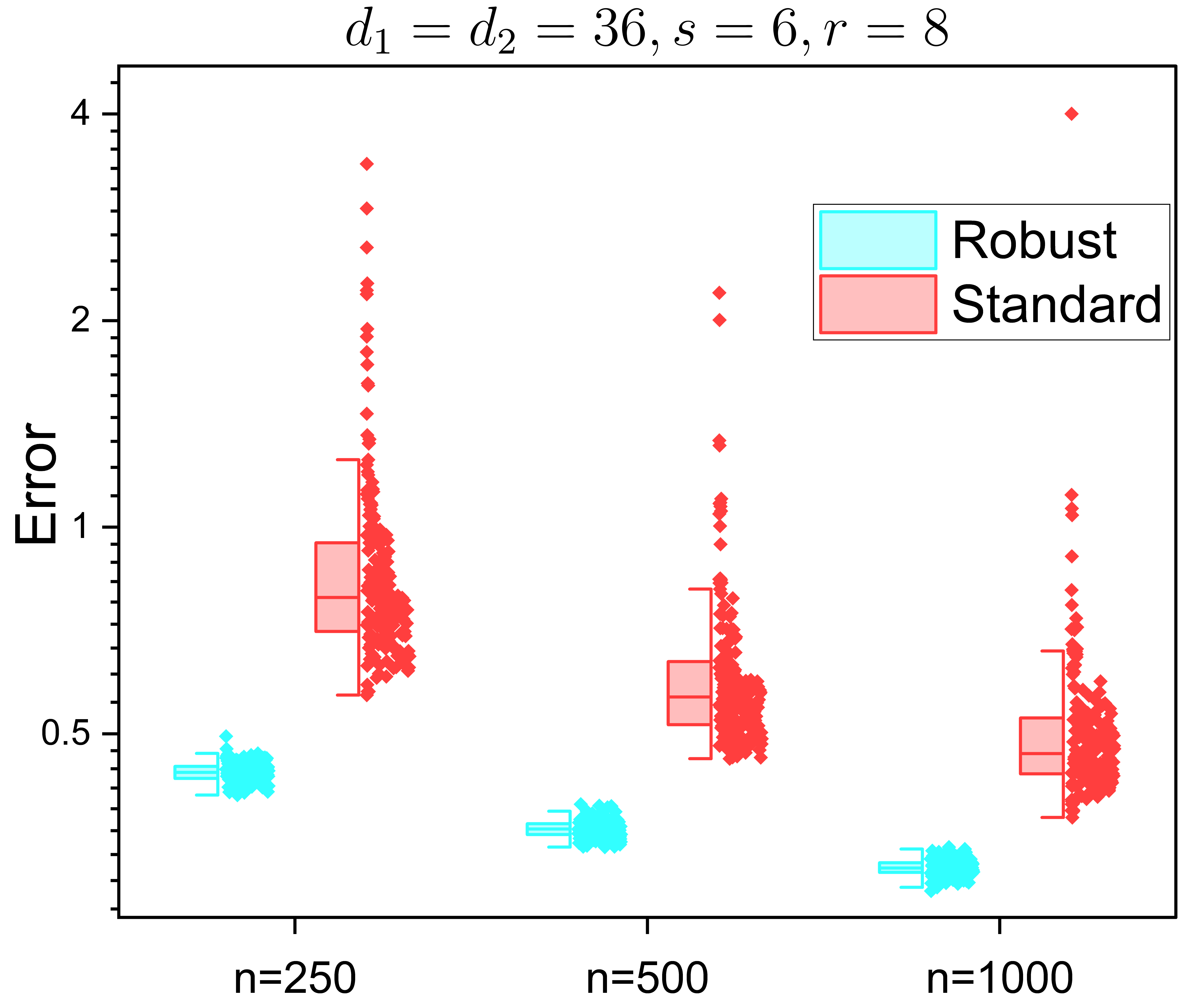}
			\caption{Statistical errors $\left\|\widehat{\Theta}-\Theta_{\star}\right\|_{F}$ v.s. the sample size $n$. The first row corresponds to case (a), while the second row corresponds to case (b). }
			\label{fig3}
		\end{minipage}
	\end{figure}

	In the next simulation, we use four $43\times 53$ dimensional $0$-$1$ matrices (Kong et al (2020)\cite{Kong}, Figure1) as the parameter matrix $\left\{\Theta_{\star}^{(k)}\right\}_{k\in [4]}$ of the model (\ref{eq1}), as shown in the image of the first row of Figure \ref{fig4}. the sample size $n=500$ and $X\sim \mathcal{N}(0_{4\times 1}, I_{4})$. For random noise term $E$, we consider the following cases:\par 
	(a) $E=Z_{1}Z_{2}^{\top}$ where $Z_{1}\sim T(0_{43\times 1}, I_{43}, 3)$ and $Z_{2}\sim T(0_{53\times 1}, I_{53}, 3)$. $Z_{1}$ and $Z_{2}$ are independent;\par
	(b) $E=[Z_{1}, Z_{2}, \cdots, Z_{53}]$ where $\{Z_{i}\}_{i=1}^{53}\stackrel{i.i.d.}{\sim} T(0_{43\times 1}, I_{43}, 2.1).$\par
	The results are summarized by Table \ref{tab1} and display that our robust estimator has much better performance than the baseline in both average estimation error and standard deviation. We randomly select a dataset from 200 reduplicate experiments and plot two estimates of $\left\{\Theta_{\star}^{(k)}\right\}_{k\in [4]}$. The images are depicted in Figure \ref{fig4} and illustrate that our robust estimator outperforms the
	traditional least squares estimate.
	\begin{figure}
		\centering
		\begin{minipage}{0.244\linewidth}
			\includegraphics[width=1.5in,height=1.3in]{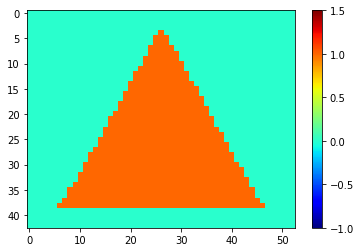}
		\end{minipage}
		\begin{minipage}{0.244\linewidth}
			\includegraphics[width=1.5in,height=1.3in]{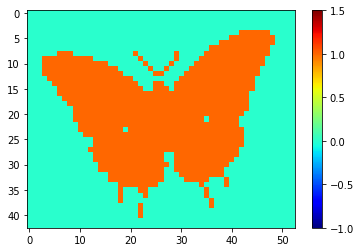}
		\end{minipage}
		\centering
		\begin{minipage}{0.244\linewidth}
			\includegraphics[width=1.5in,height=1.3in]{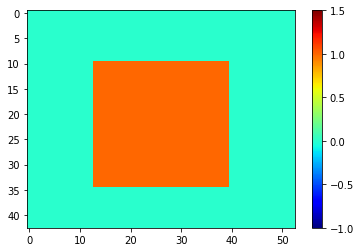}
		\end{minipage}
		\begin{minipage}{0.244\linewidth}
			\includegraphics[width=1.5in,height=1.3in]{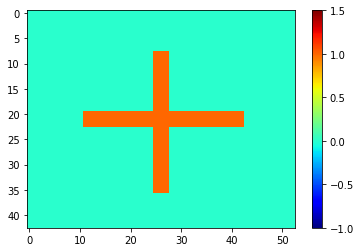}
		\end{minipage}
		
		\centering
		\begin{minipage}{0.244\linewidth}
			\includegraphics[width=1.5in,height=1.3in]{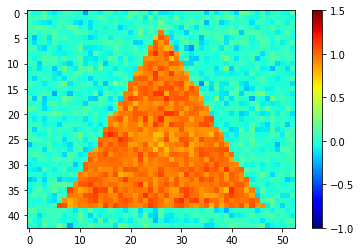}
		\end{minipage}
		\begin{minipage}{0.244\linewidth}
			\includegraphics[width=1.5in,height=1.3in]{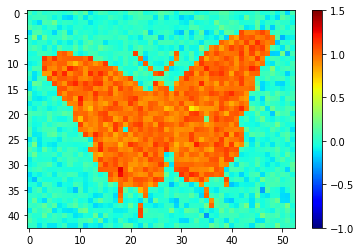}
		\end{minipage}
		\centering
		\begin{minipage}{0.244\linewidth}
			\includegraphics[width=1.5in,height=1.3in]{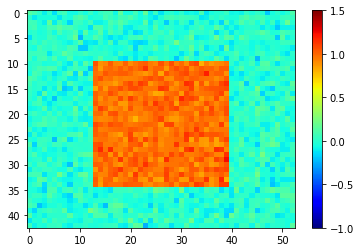}
		\end{minipage}
		\begin{minipage}{0.244\linewidth}
			\includegraphics[width=1.5in,height=1.3in]{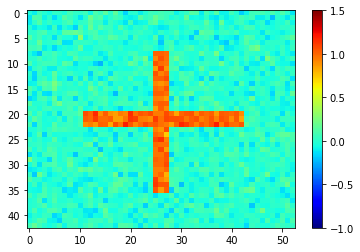}
			
		\end{minipage}
		\centering
		\begin{minipage}{0.244\linewidth}
			\includegraphics[width=1.5in,height=1.3in]{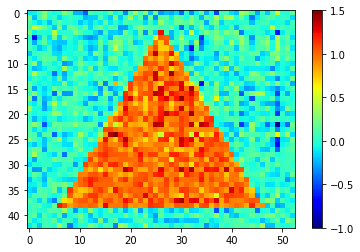}
		\end{minipage}
		\begin{minipage}{0.244\linewidth}
			\includegraphics[width=1.5in,height=1.3in]{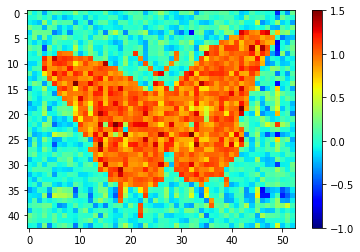}
		\end{minipage}
		\centering
		\begin{minipage}{0.244\linewidth}
			\includegraphics[width=1.5in,height=1.3in]{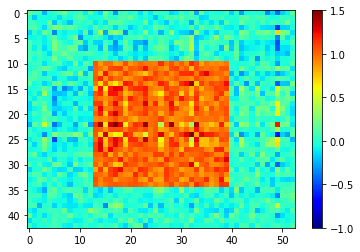}
		\end{minipage}
		\begin{minipage}{0.244\linewidth}
			\includegraphics[width=1.5in,height=1.3in]{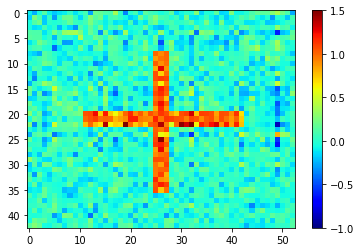}
		\end{minipage}
		
		\centering
		\begin{minipage}{0.244\linewidth}
			\includegraphics[width=1.5in,height=1.3in]{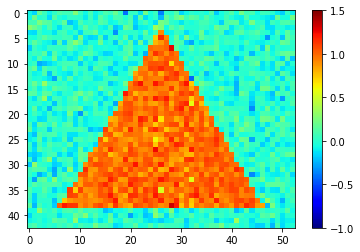}
		\end{minipage}
		\begin{minipage}{0.244\linewidth}
			\includegraphics[width=1.5in,height=1.3in]{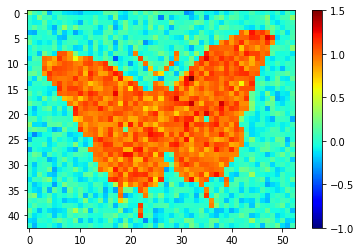}
		\end{minipage}
		\centering
		\begin{minipage}{0.244\linewidth}
			\includegraphics[width=1.5in,height=1.3in]{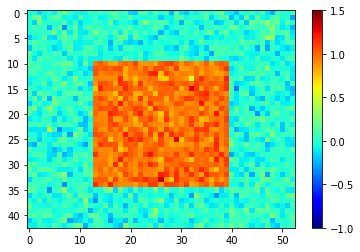}
		\end{minipage}
		\begin{minipage}{0.244\linewidth}
			\includegraphics[width=1.5in,height=1.3in]{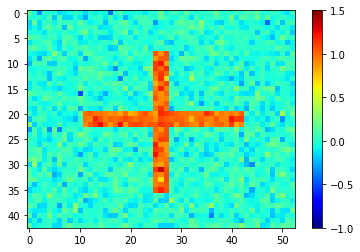}
			
		\end{minipage}
		\centering
		\begin{minipage}{0.244\linewidth}
			\includegraphics[width=1.5in,height=1.3in]{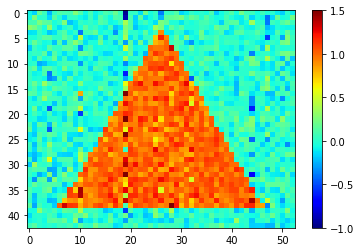}
		\end{minipage}
		\begin{minipage}{0.244\linewidth}
			\includegraphics[width=1.5in,height=1.3in]{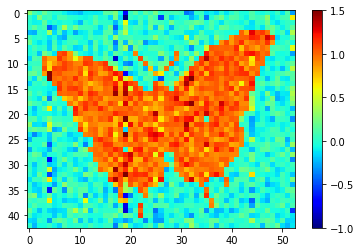}
		\end{minipage}
		\centering
		\begin{minipage}{0.244\linewidth}
			\includegraphics[width=1.5in,height=1.3in]{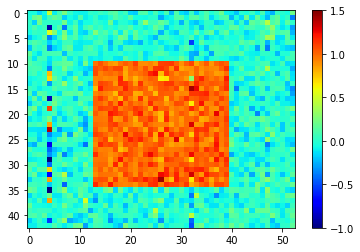}
		\end{minipage}
		\begin{minipage}{0.244\linewidth}
			\includegraphics[width=1.5in,height=1.3in]{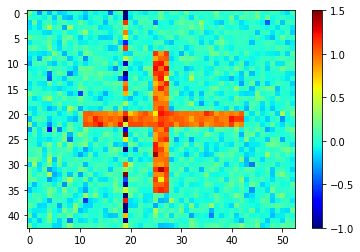}
			\caption{The first row is true images; The second and fourth rows are images reconstructed via the proposed estimator under case (a) and (b), respectively; The third and fifth rows are images reconstructed via the original least squares method under case (a) and (b).} 
			\label{fig4}
		\end{minipage}
	\end{figure}
	
	\begin{table*}[tp]
		\renewcommand\arraystretch{1.7}
		\centering
		\fontsize{10}{10}\selectfont
		\caption{Comparison of Frobenius norm estimation errors between two approaches. }  
		\label{tab1}
		\begin{tabular}{|c|c|c|c|c|c|c|}
			\hline
			Order & Methods & $\Theta_{\star}^{(1)}$ & $\Theta_{\star}^{(2)}$ & $\Theta_{\star}^{(3)}$ & $\Theta_{\star}^{(4)}$ & $\Theta_{\star}$  \\
			\hline 
			\multirow{2}{*}{(a)}& Standard  & $5.7673_{(1.30)}$ & $5.9638_{(1.72)}$ &  $5.9060_{(1.87)}$ &  $5.7308_{(1.45)}$ & $11.8539_{(2.51)}$   \\
			\cdashline{2-7}[1.5pt/2pt]
			&{\bf Robust}  & $\bf {3.9041_{(0.20)}}$ & $\bf  {3.8574_{(0.20)}}$ &  $\bf {3.7779_{(0.19)}}$ &   $\bf {3.4920_{(0.18)}}$ &  $\bf {7.5275_{(0.26)}}$    \\
			\hline
			\multirow{2}{*}{(b)}& Standard  & $6.4992_{(1.80)}$ &  $6.6159_{(2.63)}$ &  $6.2391_{(1.49)}$ &  $6.5053_{(2.14)}$ & $13.0799_{(3.63)}$ \\
			\cdashline{2-7}[1.5pt/2pt]
			&{\bf Robust}  & $\bf {5.1581_{(0.20)}}$ & $\bf {5.1335_{(0.17)}}$ & $\bf{ 5.0668_{(0.23)}}$ & $\bf {4.9595_{(0.22)}}$ & $\bf {10.1652_{(0.26)}}$  \\
			\hline
		\end{tabular}
	\end{table*}
	\section{Discussions}\label{4}
	There are some shortcomings in this paper that need further improvement and research. For example, for linear model with the matrix-type response, we only study the heavy-tailed scenario and it is our future research direction whether we can extend it to the quantization case. On the other hand, the framework of this paper can be directly extended to the one-bit quantization case with sub-Gaussian data (Chen et al. (2023)\cite{Chen1}; Dirksen et al. (2022)\cite{Dirksen2}). Due to the limitation in space and time, we will leave this idea in future work.
	
\end{document}